\theoremstyle{definition}
\newtheorem{defi}{Definition}[section]
\theoremstyle{plain}
\newtheorem{thm}[defi]{Theorem}
\newtheorem{lemma}[defi]{Lemma}
\newtheorem{prop}[defi]{Proposition}
\newcommand{\set}[1]{\left\{#1\right\}}                   																	
\newcommand{\T}{\ensuremath{{\mathcal T}}}
\newcommand{\gen}[1]{\langle #1\rangle}
\DeclareMathOperator{\stab}{Stab}
\def\imod#1{\allowbreak\mkern10mu({\operator@font mod}\,\,#1)} 
\def\@setcopyright{}                                           
\def\serieslogo@{}
\begin{document}
	\author[M.J.C.~Loquias]{Manuel Joseph C.~Loquias}
	\address[M.J.C.~Loquias]{Institute of Mathematics, University of the Philippines Diliman, 1101 Quezon City, Philippines}
	\email{mjcloquias@math.upd.edu.ph}
	
	\author[R.B.~Santos]{Rovin B.~Santos}
	\address[R.B.~Santos]{Institute of Mathematics, University of the Philippines Diliman, 1101 Quezon City, Philippines}
	\email{rbsantos8@up.edu.ph}

	\title{Perfect precise colorings of plane semiregular tilings}
	
	\begin{abstract}{
		A coloring of a planar semiregular tiling $\T$ is an assignment of a unique color to each tile of $\mathcal T$. 
		If $G$ is the symmetry group of $\T$, we say that the coloring is perfect if every element of $G$ induces a permutation on the finite set of colors. 
		If $\T$ is $k$-valent, then a coloring of $\T$ with $k$ colors is said to be precise if no two tiles of $\T$ sharing the same vertex have the same color. 
		In this work, we obtain perfect precise colorings of some families of $k$-valent semiregular tilings in the plane, where $k\leq 6$.
	}\end{abstract}
	
	\subjclass[2020]{05B45,52C20}
	
	\keywords{semiregular tilings, hyperbolic tilings, perfect colorings, precise colorings, triangle groups}
	
	\date{July 31, 2023}
	
	\maketitle

	Aside from their aesthetic and mathematical appeal, symmetrically colored tilings have been studied because of their applications in 
	crystallography and materials science (see~\cite{sen} and references therein).
	Of particular interest are perfect colorings of tilings, that is, 
	colorings where every symmetry of the uncolored tiling sends all tiles of a given color to tiles of the same color~\cite{fr,JW19}.
	In addition, colorings of patterns in the hyperbolic plane have garnered attention because of their connection 
	with quasicrystals and structural chemistry~\cite{de,bpf,miro,hyde}.
	
	The term ``precise coloring'' was coined in \cite{ri2} to refer to a 
	coloring of the regular triangular tiling $(3^n)$ using $n$ colors in which no two tiles of the same color share a common vertex. 
	
	This paper is a continuation of work on identifying perfect precise colorings of planar tilings $\T$ in \cite{ri1, cr, y1, rov}.
	In Section~\ref{methods}, we state general results on perfect precise colorings of $\T$. 
	Theorem~\ref{gencase} gives a characterization of perfect or chirally perfect colorings as partitions of $\T$ that are unions of images of certain tile orbits.
	We demonstrate in Section~\ref{example} how to obtain perfect precise colorings (with $k$ colors) of some families of plane semiregular $k$-valent tilings where $k\leq 6$. 
	These colorings were generated using a combinatorial approach and are verified to be perfect or chirally perfect using Theorem~\ref{gencase}.
	Most of the Archimedean tilings considered in \cite{cr} fall under the more general class of semiregular tilings examined in this work, 
	and his findings become special cases of results in Section~\ref{threevalent}, and Propositions~\ref{resultspqpq}, \ref{p4q4results}, and \ref{33p3presults}. 
	
	In Section~\ref{finalsec}, we extend the definition of precise colorings of $k$-valent semiregular tilings to include those with $n$ colors where $n>k$.
	This generalization was first considered in~\cite{y2}, 
	where chirally perfect colorings of the regular triangular tiling $(3^n)$ using $n+1$ colors in which $n$ different colors appear at each vertex were studied. 
	Perfect precise colorings of a semiregular tiling where the number of colors exceed the valency may be generated using Theorem~\ref{gencase}. 
	One may use GAP \cite{GAP4} to find suitable subgroups of the symmetry group of the uncolored tiling that is needed to obtain such colorings.
	This was what was done to obtain the perfect precise colorings of the $4$-valent semiregular tilings with five colors in Figs.~\ref{3464} and~\ref{6464more}
	and with six colors in Fig.~\ref{6464more}.	
	
	Let us recall that a \emph{planar tiling} refers to a set of tiles $\T=\set{t_i}_{i\in\mathbbm{N}}$ that completely cover the plane without any gaps or overlaps.
	If $H$ is a subgroup of the symmetry group $G$ of a given tiling~$\T$ and $t$ is a tile of $\T$, 
	then we define the \emph{$H$-orbit of $t$} as $Ht=\set{ht\mid h\in H}$.
	Here, we refer to $Ht$ as an $H$-orbit of $\T$. 	
	An edge-to-edge tiling of the plane by regular polygons is called \emph{semiregular} (or \emph{uniform}) if the cyclic arrangement of polygons around all vertices are the same. 
	Every semiregular tiling is denoted based on the sequential arrangement of regular polygons about a vertex. 
	That is, $(p_1.p_2.\cdots.p_k)$ is the semiregular tiling wherein the polygons occuring at any vertex are a $p_1$-gon, a $p_2$-gon$, \ldots,$ and a $p_k$-gon in this order or in reverse. 
	Since the valence of each vertex of the semiregular tiling $(p_1. p_2. \cdots .p_k)$ is $k$, we say that the tiling is \emph{$k$-valent}.  
	Examples of semiregular tilings in the spherical, Euclidean, and hyperbolic planes are shown in Fig.~\ref{fig:semireg}.
	It is well known that there are only a finite number of semiregular tilings in the spherical and Euclidean planes,  
	but an infinite number in the hyperbolic plane.
	Constructing semiregular tilings in the hyperbolic plane was explored in \cite{mi}, and more recently in \cite{datta}. 
	In this work, we deal with particular families of $k$-valent semiregular tilings where $k\leq 6$. 
	
	\begin{figure}[htb]
		\centering
		$\begin{array}{ccc}
			\includegraphics[width=1.4in]{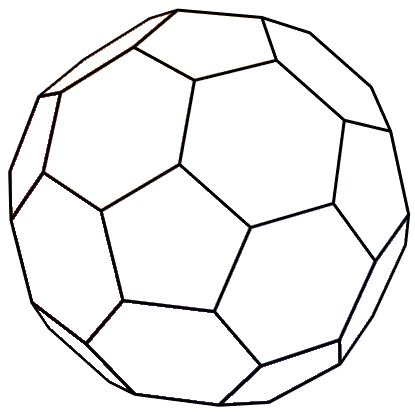} &
			\includegraphics[width=1.4in]{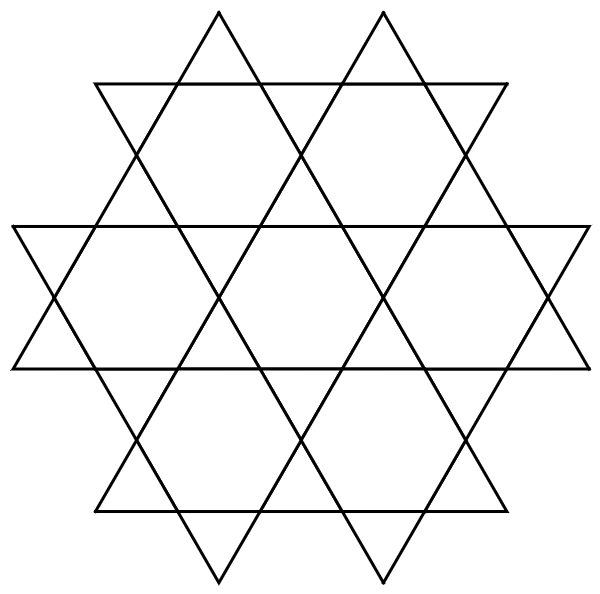} &
			\includegraphics[width=1.4in]{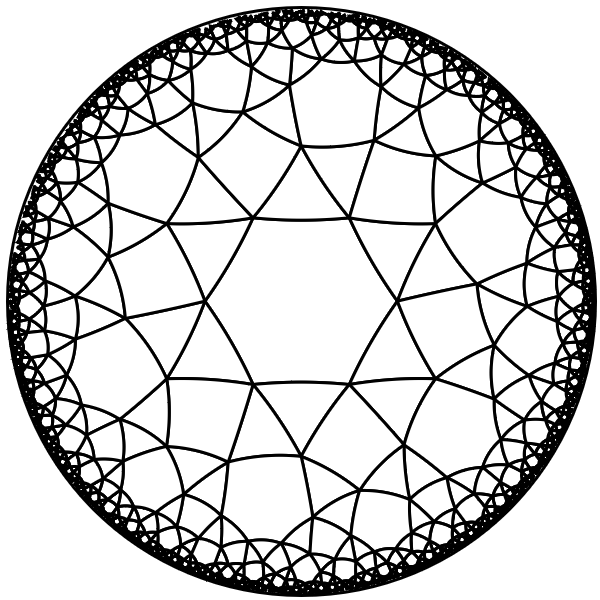}
			\\ (5.6.6) & (3.6.3.6) & (3.3.4.3.6) 
		\end{array}$
		\caption{Some semiregular tilings of the plane.} 
		\label{fig:semireg}
	\end{figure}
	
	An \emph{$n$-coloring} of a semiregular planar tiling $\mathcal T$ is a surjective map from (the tiles of) $\T$ to a finite set of $n$ colors.
	If $\mathcal{T}$ is $k$-valent, then a $k$-coloring of $\T$ is called \emph{precise} if all $k$ colors appear at every vertex of $\T$. 
	A coloring of $\mathcal T$ is said to be \emph{perfect} if every element of the symmetry group~$G$ of~$\mathcal T$ effects a permutation on the set of colors. 
	If $G$ contains indirect symmetries but only the direct symmetries (orientation-preserving symmetries) of $G$ permute the colors in a coloring of $\mathcal T$, 
	then we say that the coloring is \emph{chirally perfect}. 
	A coloring of $\mathcal T$ that has only one orbit of colors under the action of $G$ is called \emph{transitive}.
	The symmetry group of every tiling considered in this study is a triangle group ${\ast}pqr=\gen{P, Q, R: P^2 = Q^2 = R^2 = (QR)^p = (RP)^q = (PQ)^r = 1}$, or a subgroup of this group. 
	
	\section{Methods to Obtain Perfect Precise Colorings}\label{methods}
	
		There are two ways to obtain a perfect precise coloring of a tiling. 
		The first approach is to ensure at the outset that the coloring obtained is precise, 
		while making sure that the colors are permuted by generators of the symmetry group of the tiling.
		This is the approach used in \cite{cr} to obtain all perfect precise colorings of Euclidean semiregular tilings 
		where the number of colors is equal to the valency of the tiling.
		This method entails a combinatorial case-to-case analysis, 
		and becomes quite involved as the valency increases.
		Nevertheless, this method is convenient especially for $3$-valent and $4$-valent tilings.
		
		For the $3$-valent case, the following lemma appears in~\cite{cr}. 
		
		\begin{lemma}\label{3valent}
			Let $\mathcal{T}$ be a 3-valent semiregular planar tiling. 
			If a tile of $\mathcal{T}$ has an odd number of edges, then there is no precise coloring of $\mathcal{T}$.
		\end{lemma}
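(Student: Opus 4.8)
The plan is to argue by contradiction through a parity obstruction on the ring of tiles surrounding a single odd-edged tile, ultimately reducing the question to the $2$-colorability of an odd cycle. Suppose $t$ is a tile of $\mathcal{T}$ with an odd number $n$ of edges, and suppose for contradiction that a precise coloring of $\mathcal{T}$ exists. Being a polygon, $t$ has exactly $n$ vertices $v_1,\ldots,v_n$ and $n$ edges $e_1,\ldots,e_n$ arranged cyclically, with $v_i$ the common endpoint of $e_i$ and $e_{i+1}$ (indices read modulo $n$). Since the tiling is edge-to-edge, across each edge $e_i$ there lies a unique neighboring tile $T_i$.

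First I would pin down the local structure at each vertex using $3$-valence. At $v_i$ exactly three tiles meet; since $t$, $T_i$, and $T_{i+1}$ all contain $v_i$ (the latter two sharing the edges $e_i$ and $e_{i+1}$ incident to $v_i$), these three tiles must be precisely $t$, $T_i$, and $T_{i+1}$, and they are pairwise distinct. In particular, consecutive neighbors $T_i$ and $T_{i+1}$ share the vertex $v_i$, and $T_n$ and $T_1$ share $v_n$, so the neighbors $T_1,\ldots,T_n$ form a closed cycle under the relation ``shares a vertex with $t$.''

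Next I would translate preciseness into a graph-coloring constraint. In a precise coloring no two tiles meeting at a common vertex receive the same color; since exactly three colors are used and $t$ occupies one of them, every $T_i$ must take one of the two remaining colors, while consecutive pairs $T_i, T_{i+1}$ (together with $T_n, T_1$) must differ. This is exactly a proper $2$-coloring of the cycle graph $C_n$. An odd cycle admits no proper $2$-coloring, which yields the desired contradiction and shows that no precise coloring of $\mathcal{T}$ can exist.

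The step I expect to be the main obstacle — and the one I would treat most carefully — is the claim that the three tiles meeting at $v_i$ are exactly $t$, $T_i$, and $T_{i+1}$ and that these are distinct. This invokes $3$-valence twice: once to guarantee that no fourth tile intrudes at $v_i$, and once to rule out the degenerate possibility $T_i = T_{i+1}$, which would leave only two tiles at $v_i$ and violate the valency. Once this local combinatorics is secured, the global parity argument on the surrounding cycle is immediate, and the odd number of edges of $t$ is precisely what forces the obstruction.
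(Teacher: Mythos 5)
Your proof is correct and is essentially the paper's own argument, which observes that the tiles surrounding the odd-edged tile $t$ would have to alternate between the two colors other than that of $t$, an impossibility around an odd cycle. Your write-up merely makes explicit the local step (that $3$-valence forces exactly the tiles $t$, $T_i$, $T_{i+1}$ to meet at each vertex of $t$) that the paper leaves implicit.
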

		
		The proof of Lemma~\ref{3valent} is immediate: if the tile $t$ with an odd number of edges has color 1,
		then the tiles surrounding $t$ need to be colored alternately by colors 2 and 3,
		which cannot happen because of the odd number of edges of $t$.
		
		Let us consider a semiregular planar tiling $\T$ of the form $(p_1.p_2.\cdots.p_k)$ where $k\geq 3$ and the $p_i$'s are distinct.
		Hence, $\T$ is a $k$-valent tiling with $k$ different polygons meeting at every vertex of~$\T$.
		Clearly, assigning a unique color to each of the $k$ types of polygons is the only way to obtain a precise coloring of $\T$.
		Since symmetries of $\T$ are isometries, they can only map a polygonal tile to another tile of the same type.
		This implies that every symmetry of $\T$ fixes each color in the coloring of $\T$, and so the colored tiling is perfect. 
		We write this into a lemma for easy reference in the next section. 
		As far as we can tell, Lemma~\ref{even} does not appear in the literature.
		
		\begin{lemma}\label{even}
			The semiregular planar tiling $(p_1.p_2.\cdots.p_k)$ where $k\geq 3$ and the $p_i$'s are distinct
			has exactly one perfect precise coloring.
		\end{lemma}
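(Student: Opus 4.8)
The plan is to verify directly that the ``one colour per polygon type'' assignment is precise and perfect, and then to argue that it is the only precise colouring at all; the lemma follows at once.

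First I would set up the candidate colouring $c_0$: give every $p_i$-gon the colour $i$, for $i = 1, \ldots, k$. Because the $p_i$ are pairwise distinct, the $k$ tiles meeting at any vertex $v$ are of $k$ distinct types, so $c_0$ assigns them the $k$ distinct colours $1, \ldots, k$; thus all $k$ colours occur at $v$ and $c_0$ is precise. For perfection I would use that every $g \in G$ is an isometry and therefore carries a regular $p_i$-gon to a regular $p_i$-gon (isometries preserve edge count). Hence $g$ maps the colour class $c_0^{-1}(i)$ into itself for each $i$, i.e.\ $g$ induces the identity permutation on $\set{1,\dots,k}$. In particular every symmetry permutes (indeed fixes) the colours, so $c_0$ is perfect.

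The substance is uniqueness. Let $c$ be an arbitrary precise colouring. Since $\T$ is $k$-valent, exactly $k$ tiles meet at each vertex $v$, and preciseness forces all $k$ colours to appear among them; by counting, the restriction of $c$ to the tiles at $v$ is a bijection onto the colour set. Recording, for each vertex, which colour is attached to the tile of each type yields a bijection $\phi_v \colon \set{p_1, \dots, p_k} \to \set{1, \dots, k}$ with $\phi_v(\text{type of } t) = c(t)$ for every tile $t$ incident to $v$. I would then show that $\phi_v$ does not depend on $v$: if so, the colour of any tile is determined by its type, so $c$ induces the type-partition and therefore agrees with $c_0$ up to relabelling colours, giving exactly one precise colouring.

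To prove that $\phi_v$ is globally constant I would argue by propagation along the tiling. Two vertices joined by an edge $e$ both lie on the two tiles meeting along $e$, which have two distinct types; hence $\phi_v$ and $\phi_{v'}$ agree on those two types. For $k = 3$ two agreements already force $\phi_v = \phi_{v'}$ (two bijections of a $3$-set that agree on two points coincide), and since the vertex-adjacency graph of $\T$ is connected the bijection is then the same everywhere. The main obstacle is the case $k \ge 4$, where agreement on the two shared types is not by itself enough to pin down the full bijection; here I would exploit the rigidity of a semiregular tiling---namely that the cyclic arrangement of types about every vertex is the fixed sequence $(p_1.p_2.\cdots.p_k)$---to carry the local agreements around each tile and across successive vertices, and then invoke connectivity (together with simple-connectedness of the plane, so that the propagation is path-independent) to conclude that $\phi_v$ is constant. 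This consistency step is where the real care is needed; by contrast the existence and perfection of $c_0$ are routine.
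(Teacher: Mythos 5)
Your existence half is exactly the paper's argument and is fine: the $k$ tiles at any vertex have pairwise distinct types, so the type coloring is precise, and since an isometry preserves the number of edges of a tile, every symmetry fixes each color class and the coloring is perfect. The problem is the uniqueness half. For $k=3$ your propagation is complete and correct (two bijections of a three-element set agreeing on two elements are equal, and the $1$-skeleton is connected); this is in fact more careful than the paper itself, which disposes of uniqueness with ``Clearly, assigning a unique color to each of the $k$ types of polygons is the only way to obtain a precise coloring.'' But for $k\geq 4$ you offer only a plan---``exploit the rigidity \dots carry the local agreements around each tile and across successive vertices''---and you correctly flag this as the step needing real care. That step is a genuine gap, and it cannot be filled: for $k\geq 4$ the claim you are trying to propagate is false. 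Across an edge, $\phi_v$ and $\phi_{v'}$ are constrained only on the two shared types, leaving a free permutation of the remaining $k-2$ types, and on these (hyperbolic) tilings such local swaps can be assembled globally into precise colorings that are not the type coloring.

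Concretely, take $\T=(4.6.8.10)$ (all $p_i$ distinct and even, so such a semiregular hyperbolic tiling exists). Since the four tiles at a vertex are pairwise non-congruent, the stabilizer of a vertex in the symmetry group $G$ is trivial; one checks that $G$ acts simply transitively on vertices and is generated by the reflections $s_1,s_2,s_3,s_4$ in the perpendicular bisectors of the four edges at a fixed vertex $v_0$, with Coxeter presentation $s_i^2=(s_1s_2)^3=(s_2s_3)^4=(s_3s_4)^5=(s_4s_1)^2=1$. Label the tiles at $v_0$ as $T_1$ (square), $T_2$ (hexagon), $T_3$ (octagon), $T_4$ (decagon), so that $s_i$ fixes $T_i$ and $T_{i+1}$ (indices taken modulo $4$), and use the types as colors. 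The assignment $s_1\mapsto(3\,4)$, $s_2\mapsto(1\,4)$, $s_3\mapsto\mathrm{id}$, $s_4\mapsto\mathrm{id}$ satisfies every defining relation (note $(3\,4)(1\,4)$ is a $3$-cycle), hence extends to a homomorphism $\pi\colon G\to S_4$. Coloring the tile $gT_t$ with $\pi(g)(t)$ is well defined because $\pi(\stab_G(T_t))$ fixes $t$ for each $t$ (for instance $\stab_G(T_2)=\gen{s_1,s_2}$ and $\gen{(3\,4),(1\,4)}$ fixes $2$); the resulting coloring is precise, since the colors at the vertex $gv_0$ are $\pi(g)(1),\ldots,\pi(g)(4)$, and perfect, since $h\in G$ permutes the colors by $\pi(h)$. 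Yet it is not the type coloring: every hexagon gets color $2$, but the three squares around any one hexagon get the three colors $1,3,4$. So for $k\geq 4$ no propagation argument can succeed; indeed the uniqueness assertion itself fails there, both in your proposal and in the paper's ``clearly.'' Both arguments (and the lemma) are sound exactly in the case $k=3$, which is the only case in which the paper ever applies this lemma.
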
 
		
		Observe that Lemmas~\ref{3valent} and~\ref{even} imply that there is no semiregular planar tiling $(p.q.r)$
		such that $p$, $q$, and $r$ are distinct and one among them is odd. 
		This result can be verified in~\cite{mi}.		
		
		Another approach is to first obtain perfect colorings of a tiling $\T$, and then discard those that are not precise.
		In this approach, one may use suitable subgroups of the symmetry group of $\T$ to obtain perfect colorings of $\T$.
		A group-theoretic approach to obtain perfect colorings of tilings can be traced back to~\cite{waer,roth}.
		The method described here is based on the work started in~\cite{de}.
		
		If there is only one orbit of tiles under the symmetry group of $\T$, 
		then the following theorem (with $X=\mathcal{T}$ and $H=G$) characterizes perfect colorings of $\T$ as partitions of $\T$ obtained from the orbit of a tile in $\T$ by 
		some subgroup of the symmetry group of $\T$ (cf.~\cite{de,bpf}).  
		The theorem is applicable not only to perfect colorings but also to chirally perfect colorings.
		The statement and proof of the theorem appears in~\cite{Ev}, but for completeness we give here the proof.
		
		\begin{thm}\label{oc}
			Let $X$ be a subset of a planar tiling $\T$ such that the subgroup $H$ of the symmetry group $G$ of $\T$ acts transitively on~$X$.  
			Then the $m$ colors of a coloring of $X$ are permuted by elements of $H$ 
			if and only if the coloring corresponds to a partition of~$X$ of the form 
			\begin{equation}\label{poc}
				\{h (Jt)\mid h\in H\},
			\end{equation}
			where $t\in X$ and $J$ is a subgroup of index $m$ in $H$ that contains $\stab_H(t)$ (the stabilizer of $t$ with respect to $H$).
		\end{thm}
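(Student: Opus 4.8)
The plan is to exploit the transitivity of $H$ on $X$ to identify $X$ with a coset space of $\stab_H(t)$, and then read off the colorings permuted by $H$ as block systems of this action. First I would fix a base tile $t\in X$ and write $S=\stab_H(t)$. By the orbit--stabilizer correspondence the map $hS\mapsto ht$ is a well-defined $H$-equivariant bijection between the left cosets $H/S$ and $X$, so an $m$-coloring of $X$ whose colors are permuted by $H$ is exactly the same data as an $H$-invariant partition of $H/S$ into $m$ blocks. The two implications then become two translations across this dictionary.

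For the forward direction, suppose the color classes $C_1,\dots,C_m$ are permuted by $H$, with $t\in C_1$. I would set $J=\{h\in H : h(C_1)=C_1\}$, the stabilizer of the block $C_1$ under the induced action of $H$ on colors. Three facts then need checking. (i) $S\subseteq J$: if $h$ fixes $t$ then $t=ht\in h(C_1)$, and since $t$ lies in a unique class this forces $h(C_1)=C_1$. (ii) $C_1=Jt$: the inclusion $Jt\subseteq C_1$ is immediate from $t\in C_1$, while for $x\in C_1$ transitivity gives $h$ with $ht=x$, whence $h(C_1)=C_1$ and $x\in Jt$. (iii) $[H:J]=m$: here I would first observe that $H$ acts transitively on the set of colors, since any class $C_i$ contains some $x=ht$ and hence $C_i=h(C_1)$; orbit--stabilizer applied to this action on colors then gives $[H:J]=m$. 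Combining (i)--(iii), the partition is exactly the $H$-orbit $\{h(Jt) : h\in H\}$ of $C_1=Jt$, as required.

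For the converse, given $t$, a subgroup $J$ with $S\subseteq J$ and $[H:J]=m$, I would verify that $\mathcal{P}=\{h(Jt) : h\in H\}$ is genuinely a partition of $X$ into $m$ parts and that $H$ permutes these parts. That $H$ permutes the parts is automatic, since $g\cdot h(Jt)=(gh)(Jt)\in\mathcal{P}$. The substantive point is the partition property: the parts cover $X$ by transitivity, and for disjointness I would show that $h_1(Jt)\cap h_2(Jt)\neq\emptyset$ implies $h_1J=h_2J$, hence $h_1(Jt)=h_2(Jt)$. This is where the hypothesis $S\subseteq J$ is indispensable---a nonempty intersection yields $j_2^{-1}h_2^{-1}h_1j_1\in S$ for some $j_1,j_2\in J$, and only because $S\subseteq J$ can one absorb this relation into $J$ to conclude $h_2^{-1}h_1\in J$. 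The same computation shows that $h\mapsto h(Jt)$ factors through a bijection $H/J\to\mathcal{P}$, so there are exactly $[H:J]=m$ parts.

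I expect the main obstacle to be precisely this disjointness verification in the converse. It is tempting to treat $\{h(Jt) : h\in H\}$ as automatically a partition, but without the containment $\stab_H(t)\subseteq J$ the blocks can overlap partially rather than coincide, and the number of parts can then drop below $m$. Isolating the role of this containment is therefore the crux of the argument, while the remaining steps---transitivity on colors, and the two orbit--stabilizer counts---are routine.
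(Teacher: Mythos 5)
Your proposal is correct and follows essentially the same route as the paper's proof: in the forward direction you take $J$ to be the stabilizer of the color of $t$, deduce $[H:J]=m$ from transitivity on colors via orbit--stabilizer, and check $\stab_H(t)\subseteq J$; in the converse you verify covering by transitivity, disjointness from the containment $\stab_H(t)\subseteq J$, and $H$-invariance of the resulting partition. The only difference is one of detail, not of method: you spell out the coset computation behind disjointness and the identification $C_1=Jt$, which the paper asserts without elaboration.
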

		
		\begin{proof}
			Consider a coloring of $X$ with colors $1,\ldots,m$ such that the colors are permuted by elements of~$H$. 
			Then the group $H$ acts on the set of colors.						 
			Take any tile $t\in X$ and assume that its color is $1$. 
			Let $J$ be the stabilizer of $1$ with respect to $H$.
			Since $H$ acts transitively on $X$, it also acts transitively on the set of $m$ colors.
			It then follows from the Orbit-Stabilizer theorem that $[H:J]=m$.   
			Note that every element of $H$ that fixes $t$ must also fix color $1$, and so $\stab_H(t)\subseteq J$.  
			Finally, the coloring of $X$ corresponds to the partition $\{h (Jt)\mid h\in H\}$ of $X$, 
			where the color of the tiles in $h(Jt)$ is the color of tile $ht$.
			
			Conversely, $\cup_{h\in H}(h(Jt))=X$ since $H$ acts transitively on $X$.  Moreover, because the stabilizer of
			$t$ in $H$ is in $J$, the $m$ sets in $\mathcal{C}=\{h(Jt)\mid h\in H\}$ are pairwise disjoint.  
			It is clear that $\mathcal{C}$ is invariant under $H$, 
			which implies that the elements of $H$ permute the colors in a coloring of $X$ corresponding to $\mathcal{C}$.
		\end{proof}
		
		Observe from the proof of Theorem \ref{oc} that the subgroup $J$ in \eqref{poc} consists of elements of $H$ that fix the color of tile $t$, or equivalently,
		$J$ is the stabilizer of the set $Jt$ under the action of $H$ on the elements of the partition of $X$ in \eqref{poc}. 
		
		We now look at results on how to perfectly color a planar tiling that has more than one transitivity class under the action of its symmetry group.
		While the basic ideas are similar to those discussed in~\cite{JW19} concerning perfect colorings, 
		the succeeding results extend beyond perfect colorings to include chirally perfect colorings as well.
		The next lemma bears resemblance to Lemma 3 of \cite{JW19}. 
		
		\begin{lemma}\label{share}
			Let $\T$ be a planar tiling with symmetry group $G$ and $H$ be a subgroup of $G$. 
			If the $H$-orbits $X_i$ and $X_j$ of $\T$ share some color in a coloring of $\T$ where each element of $H$ permutes the colors, 
			then $X_i$ and $X_j$ have the same set of colors. 
			In particular, $X_i$ and $X_j$ have the same number of colors.
		\end{lemma}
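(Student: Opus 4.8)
The plan is to exploit the fact that, because every element of $H$ permutes the colors, $H$ acts on the finite set of colors of the coloring, and this action is compatible with the action of $H$ on the tiles of $\T$. Concretely, writing $h\cdot c$ for the image of the color $c$ under the permutation induced by $h\in H$, the defining property of such a coloring is that whenever a tile $t$ carries color $c$, the tile $ht$ carries color $h\cdot c$. First I would verify this compatibility relation directly from the definition of a coloring in which each element of $H$ permutes the colors, so that the induced $H$-action on colors is well defined and intertwines with the $H$-action on tiles.

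Next I would record the key structural observation: the set $C(X_i)$ of colors occurring on the tiles of a single $H$-orbit $X_i$ is precisely one orbit of the induced action of $H$ on the set of colors. Indeed, fix a tile $t_0\in X_i$ with color $c_0$. Since $H$ acts transitively on $X_i$, every tile $t\in X_i$ has the form $t=ht_0$ for some $h\in H$, and hence carries color $h\cdot c_0$; this shows $C(X_i)\subseteq H\cdot c_0$. Conversely, for each $h\in H$ the color $h\cdot c_0$ is realized on the tile $ht_0\in X_i$, so $H\cdot c_0\subseteq C(X_i)$. Therefore $C(X_i)=H\cdot c_0$ is exactly one orbit of the color set. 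The identical argument applied to $X_j$ shows that $C(X_j)$ is also a single orbit of the induced $H$-action.

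Finally, suppose $X_i$ and $X_j$ share a color, say $c\in C(X_i)\cap C(X_j)$. Since $C(X_i)$ and $C(X_j)$ are each a single orbit of the $H$-action on colors, and distinct orbits are disjoint, the common element $c$ forces $C(X_i)=C(X_j)$. Thus $X_i$ and $X_j$ have the same set of colors, and in particular the same number of colors, which is the assertion.

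I do not expect a genuine obstacle in this argument; its entire content rests on the transitivity of $H$ on each orbit $X_i$, which collapses the colors of $X_i$ into a single orbit of the color set. The only point requiring care is the compatibility relation established in the first step, namely that the permutation of colors induced by $h$ is exactly the one sending the color of $t$ to the color of $ht$, since every subsequent claim follows formally from the partition of the color set into $H$-orbits.
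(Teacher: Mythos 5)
Your proof is correct and follows essentially the same route as the paper's: both arguments rest on the transitivity of $H$ on each orbit $X_i$ combined with the compatibility of the induced color permutations with the action on tiles. The paper writes this as a two-sided inclusion (choosing $h_k$ carrying a tile of the shared color to each color of $X_i$, then applying the same $h_k$ to $t_j$), whereas you package the identical mechanism as the observation that $C(X_i)$ is a single $H$-orbit of colors and invoke disjointness of orbits --- a cosmetic, not substantive, difference.
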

		\begin{proof}
			Suppose that the tiles in $X_i$ have colors $1,2,\ldots,m$, and let $X_i$ and $X_j$ share color $1$. 
			Without loss of generality, assume that tiles $t_i\in X_i$ and $t_j\in X_j$ have color 1.  
			Since the group $H$ acts transitively on $X_i$,
			there exist $h_1, \ldots, h_m \in H$ such that 
			the color of tile $h_kt_i$ is $k$ for $1\leq k\leq m$. 
			Since the colors are permuted by elements of $H$, the color of tile $h_k(t_j)$ should also be $k$ for each $1\leq k\leq m$. 
			Hence all colors that appear in $X_i$ occur in $X_j$.
			The same argument reversed yields the claim.
		\end{proof}
		
		Let $G$ be the symmetry group of a planar tiling $\T$.
		The next lemma (with $O=\T$ and $H=G$) deals with the instance when the same set of colors is used to color each orbit of $\T$ under $G$ 
		to come up with a perfect coloring of $\T$.
		This result aligns closely to Theorem 4 of~\cite{JW19}.
		
		\begin{lemma}\label{norbits}
			Let $O$ be a subset of a planar tiling $\T$ with symmetry group $G$, and $H$ a subgroup of~$G$.
			If all the H-orbits $X_1,\ldots, X_n$ of $O$ have the same set of $m$ colors in a coloring of $O$,
			then the colors of~$O$ are permuted by elements of $H$ if and only if the coloring corresponds to a partition of $O$ of the form
			$\{h(Jt_1\cup\cdots\cup Jt_n)\mid h\in H\}$,
			where $t_i\in X_i$, $1\leq i\leq n$, $J$ is a subgroup of index~$m$ in $H$ that contains $\stab_H(t_1), \ldots,\stab_H(t_n)$.
		\end{lemma}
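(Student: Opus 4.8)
The plan is to generalize the proof of Theorem~\ref{oc} from a single transitive set to the situation of $n$ orbits $X_1,\ldots,X_n$ colored with a common palette of $m$ colors; a single subgroup $J=\stab_H(1)$ will govern all the orbits simultaneously, and the only genuinely new ingredient compared with the one-orbit case is that distinct $H$-orbits are disjoint. As in Theorem~\ref{oc}, both implications amount to matching the color classes of $O$ with the blocks of the stated partition.

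For the forward implication I would assume that $H$ permutes the colors and fix the color $1$. Since each orbit $X_i$ is acted on transitively by $H$ and, by hypothesis, exhibits all $m$ colors, the induced action of $H$ on the $m$ colors is transitive; setting $J=\stab_H(1)$, the Orbit-Stabilizer theorem gives $[H:J]=m$. For each $i$ I would choose $t_i\in X_i$ of color $1$, which exists precisely because $X_i$ uses every color, and note that any element of $H$ fixing $t_i$ must fix its color, so $\stab_H(t_i)\subseteq J$ for every $i$. The key computation is that for $s\in X_i$ written as $s=ht_i$ the color of $s$ equals $h\cdot 1$, which is $1$ exactly when $h\in J$; hence $Jt_i$ is precisely the set of color-$1$ tiles inside $X_i$, and $Jt_1\cup\cdots\cup Jt_n$ is the full color-$1$ class in $O$. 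Translating by $h$ sends this set to the class of color $h\cdot 1$, so as $h$ ranges over coset representatives of $J$ we recover all $m$ color classes, exhibiting the coloring as the partition $\set{h(Jt_1\cup\cdots\cup Jt_n)\mid h\in H}$.

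For the converse I would start from a subgroup $J$ of index $m$ containing each $\stab_H(t_i)$ and verify that $\set{h(Jt_1\cup\cdots\cup Jt_n)\mid h\in H}$ is a genuine partition of $O$ into $m$ blocks on which $H$ acts, so that the associated coloring has its colors permuted by $H$. Coverage is immediate since $\bigcup_{h\in H}h(Jt_i)=Ht_i=X_i$ and the $X_i$ exhaust $O$; and because the block $h(Jt_1\cup\cdots\cup Jt_n)$ depends only on the coset $hJ$, there are at most $m$ distinct blocks. The crux---and the step I expect to be the main obstacle---is disjointness: if two blocks meet, a common tile yields $h_1 j\,t_a=h_2 j'\,t_b$ with $j,j'\in J$. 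When $a\neq b$ this would force a tile of $X_a$ to coincide with a tile of $X_b$, contradicting the disjointness of distinct $H$-orbits (the feature absent from Theorem~\ref{oc}); when $a=b$, the containment $\stab_H(t_a)\subseteq J$ forces $h_1 J=h_2 J$, exactly as in the single-orbit case. Thus distinct cosets give disjoint nonempty blocks, so there are exactly $m$ of them, and since $h'\cdot h(Jt_1\cup\cdots\cup Jt_n)=(h'h)(Jt_1\cup\cdots\cup Jt_n)$ the partition is $H$-invariant; consequently $H$ permutes the colors. I would finally observe that each block meets each $X_i$ in $h(Jt_i)\neq\emptyset$, which recovers the standing hypothesis that every orbit carries all $m$ colors.
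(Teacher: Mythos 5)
Your proof is correct and takes essentially the same approach as the paper's: both identify $J$ as the stabilizer of a color shared by all the orbits $X_i$ (so that $[H:J]=m$ by Orbit-Stabilizer and $\stab_H(t_i)\subseteq J$ for each $i$) and match the color classes of $O$ with the translates $h(Jt_1\cup\cdots\cup Jt_n)$. The only difference is expository: the paper applies Theorem~\ref{oc} to each orbit and glues the resulting per-orbit partitions, whereas you inline that argument, and your explicit disjointness check for blocks meeting distinct orbits spells out what the paper compresses into the remark that the $H$-actions on the colors of the $X_i$ are equivalent.
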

		\begin{proof}
			It follows from Theorem~\ref{oc} that the coloring of $X_1$ can be viewed as the partition 
			\[\{h(Jt_1)\mid h\in H\}\] for some tile $t_1\in X_1$ with color $c(t_1)$ and $J=\stab_H(c(t_1))$ of index $m$ in $H$ that contains $\stab_H(t_1)$.  
			For $i\in\{2,\ldots,n\}$, choose $t_i\in X_i$ such that tile $t_i$ has color $c(t_1)$.  
			Since elements of $H$ permute the colors in the coloring of $O$, 
			each coloring of $X_i$ corresponds to the partition $\{h(Jt_i)\mid h\in H\}$ and $\stab_H(t_i)$ is contained in $J$ for $2\leq i\leq n$.  
			This shows that the coloring of~$O$ corresponds to the partition $\{h(Jt_1\cup \cdots\cup Jt_n)\mid h\in H\}$ of $O$,
			where the color of the tiles in $h(Jt_1\cup\cdots\cup Jt_n)$ is the color of tile $ht_1$.
			
			For the other direction, it follows from Theorem~\ref{oc} that the $m$ colors in the coloring of each orbit~$X_i$ are permuted by elements of $H$.
			In addition, $J$ is the stabilizer of the color of tiles~$t_1,\ldots,t_n$, that is,
			$J$ is the stabilizer of the subsets $Jt_1,\ldots,Jt_n$ under the action of $H$ on the respective partitions of $X_1,\ldots,X_n$.
			This implies that the action of $H$ on the set of colors in the coloring of each orbit $X_i$ are equivalent.
			Hence, the elements of $H$ permute the colors in the resulting coloring of~$O$.
		\end{proof}
		
		Finally, given a colored tiling $\T$ whose colors are permuted by the subgroup $H$ of the symmetry group $G$ of $\T$,
		we may group the orbits of $\T$ under $H$ having the same set of colors by Lemma~\ref{share}.  
		Upon doing this, we apply Lemma~\ref{norbits} to associate
		the coloring of each collection of orbits as a partition of the union of the orbits.  
		This yields the following general result, which is more straightforward and easier to apply compared to Theorem 5 of \cite{JW19}.
		
		\begin{thm}\label{gencase}
			Let $H$ be a subgroup of the symmetry group $G$ of a planar tiling $\T$, 
			and suppose $X_1,\ldots,X_n$ are the $H$-orbits of $\T$.  
			Then the $m$ colors in a coloring of $\T$ are permuted by elements of $H$
			if and only if it corresponds to a partition of $\T$ of the form $\cup_{i\in I}{O_i}$, where
			\begin{equation}\label{ohi}
				O_i=\bigg\{h\Big(\bigcup_{j=1}^{n_i}J_it_{i,j}\Big)\mid h\in H\bigg\},
			\end{equation}
			with $\sum_{i\in I}{n_i}=n$, $\cup_{i\in I}\cup_{j=1}^{n_i}{t_{i,j}}$ is a complete set of $H$-orbit representatives of $\T$, 
			$J_i$ is a subgroup of $H$ that contains $\stab_H(t_{i,j})$, for $1\leq j\leq n_i$,
			and $m=\sum_{i\in I}[H:J_i]$.
		\end{thm}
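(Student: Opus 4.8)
The plan is to build the characterization directly out of the two preceding lemmas: Lemma~\ref{share} supplies the partition of the $H$-orbits into groups sharing a common palette, and Lemma~\ref{norbits} describes the coloring of each such group.

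For the forward direction, I would begin by assuming that the $m$ colors are permuted by the elements of $H$ and considering the relation on $\{X_1,\ldots,X_n\}$ that declares two orbits related when they share at least one color. By Lemma~\ref{share}, orbits sharing a color in fact share their entire color set, so this relation is an equivalence relation; its classes partition the orbits into groups indexed by a set $I$, with orbits in the same group carrying a common palette and orbits in different groups carrying disjoint palettes. Writing $n_i$ for the number of orbits in group $i$ and $t_{i,1},\ldots,t_{i,n_i}$ for representatives of those orbits, I obtain $\sum_{i\in I}n_i=n$, and collecting the representatives over all $i$ gives a complete set $\cup_{i\in I}\cup_{j=1}^{n_i}t_{i,j}$ of $H$-orbit representatives of $\T$.

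Next I would fix $i\in I$, let $O$ be the union of the $n_i$ orbits in group $i$, and apply Lemma~\ref{norbits}. Since $O$ is a union of $H$-orbits it is $H$-invariant, so the elements of $H$ permute the palette of $O$; as the orbits in group $i$ share the same set of, say, $m_i$ colors, Lemma~\ref{norbits} yields a subgroup $J_i$ of index $m_i$ in $H$ containing each $\stab_H(t_{i,j})$ such that the coloring of $O$ corresponds to the set $O_i$ appearing in \eqref{ohi}. Because the palettes of distinct groups are disjoint and together exhaust all $m$ colors, I conclude that $m=\sum_{i\in I}m_i=\sum_{i\in I}[H:J_i]$ and that the coloring of $\T$ is $\cup_{i\in I}O_i$, as claimed.

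For the converse, I would start from a partition $\cup_{i\in I}O_i$ of the stated form and run the argument in reverse: for each fixed $i$, the backward direction of Lemma~\ref{norbits} applied to $O_i$ shows that the $[H:J_i]$ colors on group $i$ are permuted by $H$, and since the groups use disjoint palettes whose union is the full color set, every element of $H$ permutes all $m$ colors. I expect the main obstacle to be the bookkeeping in the forward direction: checking carefully that Lemma~\ref{share} upgrades ``sharing a color'' to a genuine equivalence relation, so that the index set $I$ is well defined and the palettes of distinct groups are truly disjoint, and then confirming that the per-group color counts $[H:J_i]$ sum to $m$ with no overlap.
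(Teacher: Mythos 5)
Your proposal is correct and takes essentially the same route as the paper, which obtains Theorem~\ref{gencase} precisely by grouping the $H$-orbits that share a color set via Lemma~\ref{share} and then applying Lemma~\ref{norbits} to each group of orbits. Your additional bookkeeping (verifying that ``sharing a color'' is an equivalence relation and that the palettes of distinct groups are disjoint and exhaust all $m$ colors) merely fills in details the paper leaves implicit.
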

		
		Hence, we may use Theorem~\ref{gencase} to obtain all (chirally) perfect colorings of an $m$-valent semiregular planar tiling $\T$.
		To reduce the number of non-precise perfect colorings that will be obtained using this method,
		vertex and edge adjacencies of tiles may be considered a priori.
		For instance, if $k$ tiles in the subset $O_i$ of $\T$ meet at a vertex, 
		then the coloring of $O_i$ must have at least $k$ colors, or equivalently, 
		the subgroup $J_i$ of $H$ must satisfy $[H:J_i]\geq k$.		
		
		It can be shown that this procedure yields the same results in~\cite{cr} for (chirally) perfect colorings of semiregular Euclidean planar tilings.
		Note though that in this approach, knowledge of the subgroup structure of the symmetry groups of the tilings plays an important role.
		This makes the method applicable for specific tilings, but difficult to use when obtaining results for a family of tilings.
			
		To illustrate, consider the $(3.4.6.4)$ tiling $\T$ of the Euclidean plane with symmetry group $G$ of type $\ast 632$.
		The tiling $\T$ has three orbits under the action of $G$: the orbit $X_1$ of hexagons, the orbit~$X_2$ of squares, and the orbit $X_3$ of triangles.
		It has only one perfect precise coloring with four colors.
		This coloring consists of two orbits of colors: the orbit $O_1=X_1\cup X_2$, where $X_1$ and $X_2$ share the same set of three colors, 
		while each tile in the orbit $O_2=X_3$ is assigned the same color (see Fig.~3 of \cite{cr}). 
		The coloring of $O_1$ corresponds to the partition $\{g(Jt_1\cup Jt_2)\mid g\in G\}$, where
		$t_1$ is a hexagon, $t_2$ is a square that is not adjacent to any tile in $Jt_1$, and
		$J$ is a subgroup of index 3 in~$G$ of type $\ast 632$ that contains $\stab_G(t_1)=D_6$ and $\stab_G(t_2)=D_2$.
		On the other hand, the trivial coloring of $O_2$ corresponds to the partition $\{g(Gt_3)\mid g\in G\}=\{X_3\}$, where $t_3$ is any triangle in $\T$.
	
	\section{Examples of Perfect Precise Colorings}\label{example}
	
		In this section, we obtain perfect precise colorings of some semiregular planar tilings $\T$ of valencies~3, 4, 5, and 6.
		We restrict the number of colors in the colorings of $\T$ to be the valency of~$\T$. 
		To obtain these colorings, we start out with a combinatorial argument to exhaust all possible perfect precise colorings. 
		Afterwards, we verify that the obtained colorings of $\T$ are indeed perfect by giving the corresponding partition of $\T$ of the form given in \eqref{ohi}.
		We note that if the coloring of~$\T$ is perfect and there is a vertex $v$ of $\T$ such that no two tiles meeting at $v$ have the same color, then the coloring of $\T$ must be precise.
		This is because the action of the symmetry group of $\T$ is transitive on the set of vertices of $\T$.
	
		\subsection{3-valent Semiregular Planar Tilings}\label{threevalent}
			
			We have the following result: a 3-valent semiregular tiling $\T=(p_1.p_2.p_3)$ admits exactly one perfect precise coloring if and only if each $p_i$ is even. 
			Indeed, the forward direction follows from Lemma~\ref{3valent}.
			For the converse, observe that assigning a tile $t$ by color 1, 
			and then assigning colors 2 and 3 alternately to the surrounding tiles of $t$ gives rise to a precise coloring of $\T$.
			It remains to show that this unique precise coloring of $\T$ is perfect.
			One way to conclude this is to look at the effects of the mirror reflections $P$, $Q$, and $R$ on each of the three colors.
			More formally, we note that the case where $p_1$, $p_2$, and $p_3$ are distinct is covered by Lemma~\ref{even}.
			The case when $\T$ is regular, that is, when $p_1=p_2=p_3=p$ for some $p$, was considered in~\cite{rov}.
			In this case, the symmetry group of $\T$ is $G=\ast p32$ and the precise coloring of $\T$ is perfect because it corresponds to the partition
			$\set{g(Jt)\mid g\in G}$ of $\T$, where $t$ is the $p$-gon stabilized by the $p$-fold rotation $QR$ and $J$ is the subgroup $\gen{PRQRP,Q,R}$ of index $3$ in $G$.
			The last case is, without loss of generality, when $q:=p_1$ is distinct from $2p:=p_2=p_3$ for some integer~$p$.
			Here $G=\ast pq2$ and the precise coloring of $\T$ is perfect because it corresponds to the partition
			$\set{Gt_1}\cup\set{g(Jt_2)\mid g\in G}$ of $\T$, where $t_1$ is any $q$-gon, $t_2$ is the $2p$-gon stabilized by the $p$-fold rotation $QR$, 
			and $J$ is the subgroup $\gen{PRP,Q,R}$ of index 2 in $G$.	
			Fig.~\ref{3colors} illustrates the perfect precise 3-colorings of some 3-valent semiregular tilings,
			where the axes of the generators $PRP$, $Q$, and $R$ of the subgroup $J$ of~$G$ are shown on the coloring of the $(8.6.6)$ tiling.
			
			\begin{figure}[htb]
				\centering 
				$\begin{array}{ccc}
					\includegraphics[height=1.4in]{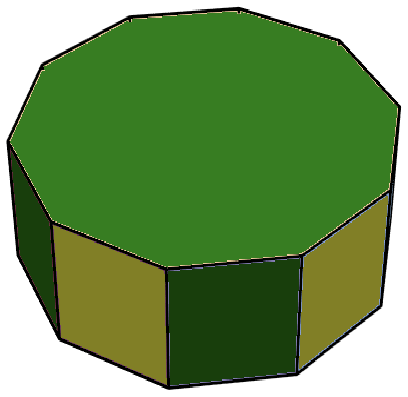} &
					\includegraphics[height=1.4in]{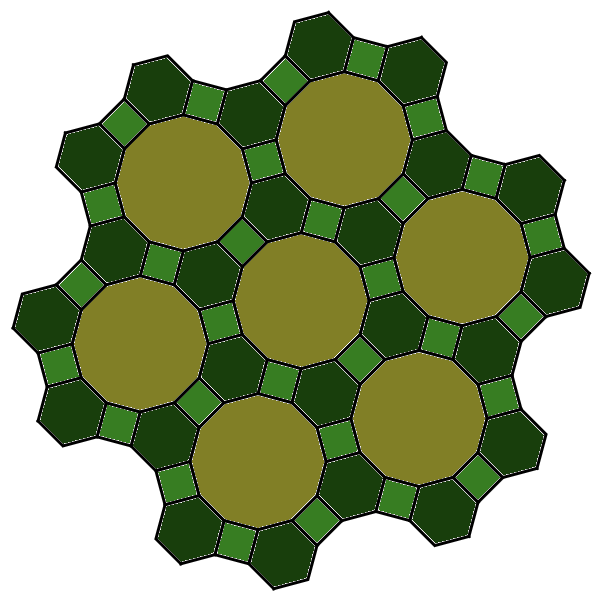} &
					\includegraphics[height=1.4in]{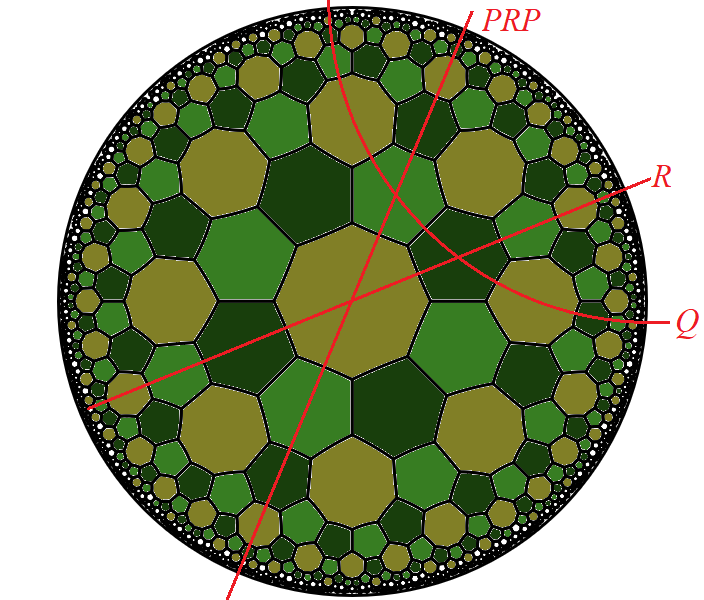} \\ 
					(10.4.4)&(4.6.12)& (8.6.6)\\ 
				\end{array}$
				\caption{Perfect precise 3-coloring of some 3-valent semiregular tilings.}
					\label{3colors}
				\end{figure}
	
		\subsection{4-valent Semiregular Planar Tilings}   
	
			We consider first the family of semiregular planar tilings $\T_1=(p.q.p.q)$. The tiling $\T_1$ consists of two orbits of tiles, the set of $p$-gons and the set of $q$-gons, under its symmetry group $G={\ast}pq2$.  We note the following: (1) the center of each $p$-gon ($q$-gon) is a center of a $p$-fold ($q$-fold) rotation; 
			(2) $p$ $q$-gons are adjacent to every $p$-gon ($q$ $p$-gons are adjacent to every $q$-gon); and 
			(3) $p$ $p$-gons are attached to each $p$-gon at the vertices ($q$ $q$-gons are attached to each $q$-gon at the vertices).  
			Using these properties of $\T_1$, we now outline how a perfect precise 4-coloring of $\T_1$ is achieved.  
			
			Let $t$ be any $p$-gon in the tiling $\T_1$ and say $t$ is given color 1.  
			We then assign colors 2, 3, and 4 to the $p$-gons and $q$-gons that surround $t$.
			There are two ways to assign them to the $p$ $q$-gons that are adjacent to $t$:
			(1) the $q$-gons alternately get two of the three colors, say colors $2$ and $3$ (see Fig.~\ref{fig:f4.5}(a)), or 
			(2) the $q$-gons get colors $2$, $3$, and $4$ in a cyclical manner (see Fig.~\ref{fig:f4.5}(b)). 
			We deal with these two cases separately.
			
			\begin{figure}[htb]
				\centering 
				$\begin{array}{cc}
					\includegraphics[width=1.2in]{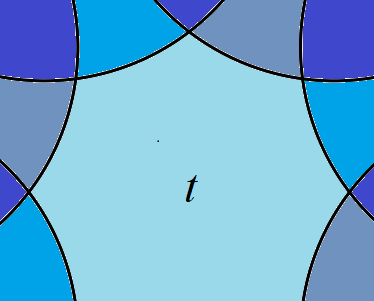}& 
					\includegraphics[width=1.2in]{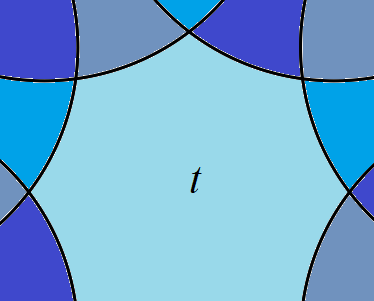} \\ (a) & (b)\\
				\end{array}$
				\caption{Possible color assignments for a perfect precise 4-coloring of the tiling $(p.q.p.q)$. \label{fig:f4.5}} 
			\end{figure}	
			
			If the $p$ $q$-gons adjacent to $t$ are assigned colors $2$ and $3$ alternately, then $p$ must be even for the resulting coloring of $\T_1$ to be precise.
			This assignment forces all $p$ {$p$-gons} attached to $t$ to be assigned color $4$.
			Let us now look at the colored $q$-gons adjacent to~$t$.
			Observe that each of these $q$-gons has a $q$-fold rotational symmetry that sends $t$ (colored~1) to one of the $p$-gons colored $4$ and sends a $p$-gon colored $4$ to $t$.  
			Since we aim for the coloring of $\T_1$ to be perfect, the $q$ $p$-gons attached to each colored $q$-gon must have alternating colors $1$ and $4$.
			To ensure that the coloring of $\T_1$ remains precise, $q$~must also be even.
			In addition the $q$ $q$-gons attached to a $q$-gon colored $2$ should be assigned color~$3$, and 
			all $q$ $q$-gons attached to a $q$-gon colored $3$ should be assigned color $2$.
			Applying the same arguments successively to each layer of $p$-gons and $q$-gons surrounding $t$, 
			we obtain a perfect precise $4$-coloring of $\T_1$ where the $q$-gons adjacent to each $p$-gon are alternately colored $2$ and $3$
			and the $p$-gons adjacent to each $q$-gon are alternately colored $1$ and $4$.    
			The coloring consists of two color orbits and is precise by construction.
			The coloring is indeed perfect because it corresponds to the partition $O_1\cup O_2$ of $\T_1$,
			with $O_1=\set{g(J_1t_1)\mid g\in G}$ where $t_1$ is the $p$-gon stabilized by the $p$-fold rotation $QR$ and $J_1=\gen{PRP,Q,R}$ of index $2$ in $G$ and 
			$O_2=\set{g(J_2t_2)\mid g\in G}$ where $t_2$ is the $q$-gon stabilized by the $q$-fold rotation $PR$ and $J_2=\gen{P,QRQ,R}$ of index $2$ in $G$.
			
			On the other hand, if the $p$ $q$-gons that are adjacent to $t$ are colored $2$, $3$, and $4$ in a repeating manner, 
			then $p$ must be a multiple of three for the resulting coloring of $\T_1$ to be precise.
			This color assignment implies that the $p$ $p$-gons attached to $t$ should also be colored $2$, $3$, and $4$ in a repeating manner such that
			the $p$-gon adjacent to $q$-gons colored $2$ and $3$ (resp.~$2$ and $4$, and $3$ and $4$) should be colored $4$ (resp.~$3$ and $2$). 
			We now turn our attention to the colored $q$-gons attached to~$t$.
			If the $q$-gon $t'$ is colored $2$, then it will be surrounded by three adjacent $p$-gons with colors $4$, $1$, and $3$.
			For the coloring of $\T_1$ to be perfect, the $q$ $p$-gons adjacent to $t'$ need to be colored $1$, $3$, and $4$ in a repeating manner.
			This means that $q$ must also be a multiple of $3$.
			At the same time, the $q$ $q$-gons attached to $t'$ should be colored $1$, $3$, and $4$ in a repeating manner so that the coloring of $\T_1$ remains precise.
			Similar arguments yield the colors of every $p$-gon and $q$-gon surrounding each colored $q$-gon adjacent to $t$.  
			Repeating the same reasoning to each layer of $p$-gons and $q$-gons surrounding $t$ results to a perfect precise $4$-coloring of $\T_1$
			where the $p$-gons and $q$-gons surrounding each $p$-gon and each $q$-gon are colored by the other three colors in a cyclical manner.
			The symmetry group $G$ of $\T_1$ acts transitively on the four colors, and the coloring of $\T_1$ is precise by construction. 
			The coloring of $\T_1$ is also perfect because it corresponds to the partition $\set{g(Jt_1\cup Jt_2)\mid g\in G}$ 
			where $t_1$ is the $p$-gon stabilized by the $p$-fold rotation $QR$, $t_2$ is the $q$-gon stabilized by the reflection $PRQRQRP$,
			and $J=\gen{PRPRPR,PRQRQRP,Q,R}$ of index $4$ in $G$.
			
			Thus, we obtain the following result.
			
			\begin{prop}\label{resultspqpq} 
				Let $\T$ be the semiregular planar tiling $(p.q.p.q)$.
				Then the number of perfect precise $4$-colorings of $\T$ is
				\begin{enumerate}
					\item one if and only if one of the following hold: $p$ and $q$ are even and $p$ or $q$ is not a multiple of three, or
					$p$ and $q$ are multiples of 3 and $p$ or $q$ is not even; and
					
					\item two if and only if $p$ and $q$ are divisible by 6.
				\end{enumerate}
			\end{prop}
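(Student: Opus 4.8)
The plan is to show that every perfect precise $4$-coloring of $\T=(p.q.p.q)$ restricts, around any chosen $p$-gon, to exactly one of the two local patterns analysed above, and then to determine precisely when each pattern extends to a global coloring. Fix a $p$-gon $t$ and normalise its color to $1$ (as in the discussion, this uses the freedom to relabel colors). By preciseness the four tiles meeting at each vertex of $t$ carry all four colors, so the $p$ $q$-gons adjacent to $t$ receive colors from $\{2,3,4\}$ and consecutive ones (which share a vertex of $t$) differ. First I would use that $t$ is the center of a $p$-fold rotation $\rho\in G$ (property~(1)), which for a perfect coloring must induce a permutation $\sigma$ of the colors fixing $1$. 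Since $\rho$ cyclically permutes the adjacent $q$-gons, the color of the $(i{+}1)$-st such $q$-gon is $\sigma$ applied to the color of the $i$-th; the preciseness constraint that neighbours differ then forces $\sigma$ to have no fixed color along this orbit, while $\sigma^{p}=\mathrm{id}$ because $\rho$ has order $p$.

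Next I would classify such $\sigma\in\mathrm{Sym}\{2,3,4\}$. The identity is excluded; a transposition forces the alternating assignment (Case~1) and requires $2\mid p$; and a $3$-cycle forces the cyclic assignment (Case~2) and requires $3\mid p$. This recovers exactly the two patterns treated above and shows there are no others. Applying the same reasoning to the $q$-fold rotation centred at an adjacent $q$-gon propagates the constraint and yields the matching divisibility condition on $q$, so that Case~1 occurs if and only if $p$ and $q$ are both even, and Case~2 if and only if $p$ and $q$ are both multiples of $3$. In each admissible case the layer-by-layer propagation already carried out determines the coloring uniquely, and its perfectness is certified by the explicit partitions exhibited earlier: $O_1\cup O_2$ with the index-$2$ subgroups $\gen{PRP,Q,R}$ and $\gen{P,QRQ,R}$ for Case~1, and $\{g(Jt_1\cup Jt_2)\mid g\in G\}$ with the index-$4$ subgroup $J=\gen{PRPRPR,PRQRQRP,Q,R}$ for Case~2.

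Finally I would assemble the count. When both cases are admissible the two colorings are genuinely distinct: in Case~2 the group $G$ acts transitively on the four colors, whereas in Case~1 it does not, since the coloring splits into the two color orbits $O_1$ and $O_2$; hence no double-counting occurs. The number of perfect precise $4$-colorings therefore equals the number of admissible cases. It is $1$ exactly when precisely one of the conditions ``$p,q$ both even'' and ``$p,q$ both multiples of $3$'' holds, which is the disjunction in part~(1), and it is $2$ exactly when both hold, i.e.\ when $6\mid p$ and $6\mid q$, giving part~(2). The main obstacle is the exhaustiveness step, namely ruling out any third local pattern via the classification of $\sigma$ and confirming that each admissible local pattern extends to a \emph{unique} global coloring; once this rigidity is in place, the counting and its translation into the stated divisibility conditions are routine.
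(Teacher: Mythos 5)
Your proposal is correct, and its skeleton is the same as the paper's: the same two local patterns around a $p$-gon, the same divisibility conditions on $p$ and $q$, layer-by-layer uniqueness of the extension, perfectness certified by the same explicit orbit partitions and subgroups, and the same final count. Where you genuinely add something is the exhaustiveness step. The paper simply asserts that the $p$ $q$-gons adjacent to $t$ are colored either alternately with two colors or cyclically with three; for a merely precise coloring this assertion is false (for $p=6$ the cycle of $q$-gons could be colored $2,3,2,3,2,4$), so it secretly uses perfectness. You make this explicit by classifying the permutation $\sigma$ of $\{2,3,4\}$ induced by the $p$-fold rotation about $t$: the identity is excluded by preciseness, a transposition forces the alternating pattern and $2\mid p$, and a $3$-cycle forces the cyclic pattern and $3\mid p$. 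That is tighter than the paper's treatment and is the right way to rule out a third pattern. Conversely, the one place where your write-up is thinner than the paper's is the propagation to $q$: ``applying the same reasoning'' at an adjacent $q$-gon $t'$ needs the observation that the induced permutation at $t'$ is not free to be of either type, because the two $p$-gons flanking $t$ in the cycle of $p$-gons around $t'$ already have forced colors (both $4$ in Case 1; $3$ and $4$ in Case 2), which pins that permutation to $(1\,4)$, respectively $(1\,4\,3)$, and hence forces the matching case and the matching divisibility at $t'$; this is what excludes ``mixed'' colorings, and it is exactly the step the paper carries out via the $q$-fold rotation. Finally, like the paper's own discussion, your argument implicitly assumes $p\neq q$ (so that $G={\ast}pq2$ and the $p$-gons and $q$-gons form separate orbits); the case $p=q$, where the symmetry group is larger and perfectness must be checked against the full group, is covered in the paper only by the remark following the proposition.
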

			
			Proposition~\ref{resultspqpq} remains valid even if $p=q$ 
			(in~\cite{rov}, the unique perfect precise $4$-colorings of the octahedron $(3^4)$ and the regular Euclidean tiling $(4^4)$ are discussed,	
			and the two perfect precise $4$-colorings of the regular hyperbolic tiling $(6^4)$ are shown). 
			The existence of the unique perfect precise 4-coloring of the Euclidean semiregular tiling $(6.3.6.3)$ in \cite{cr} also agrees with Proposition~\ref{resultspqpq}. 
			Note that Proposition~\ref{resultspqpq} implies that there are no perfect precise 4-colorings of the semiregular spherical tilings $(3.4.3.4)$ and $(3.5.3.5)$.
			We show in Fig.~\ref{fig:f4.6} perfect precise 4-colorings of the semiregular tiling $(p.q.p.q)$ for some values of $p$ and~$q$.
			The axes of the generators of the subgroups $J_1=\gen{PRP,Q,R}$ and $J_2=\gen{P,QRQ,R}$ of $G$ are drawn on the coloring of the $(6.4.6.4)$ tiling.
			
			\begin{figure}[htb]
				\centering
				$\begin{array}{ccc}
					\includegraphics[height=1.4in]{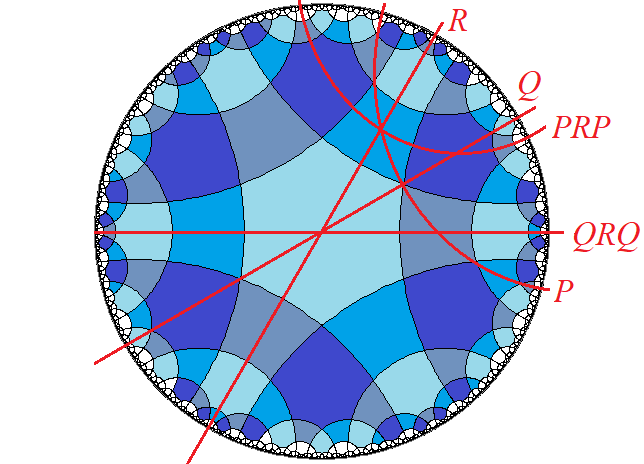} &
					\includegraphics[height=1.4in]{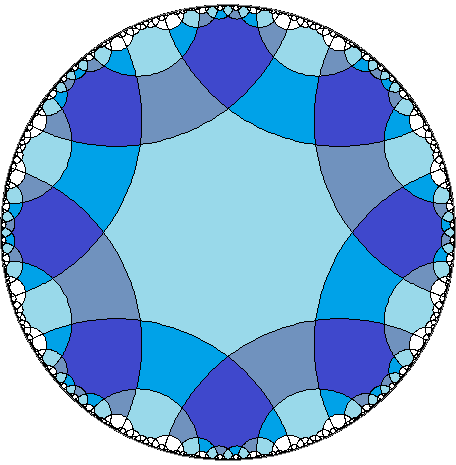} &
					\includegraphics[height=1.4in]{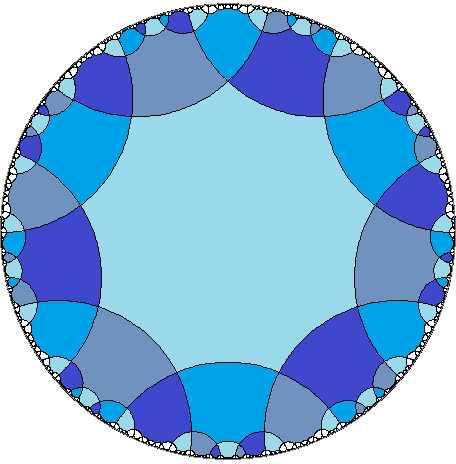}\\
					(6.4.6.4) & (8.4.8.4) & (9.6.9.6)\\
					\includegraphics[height=1.4in]{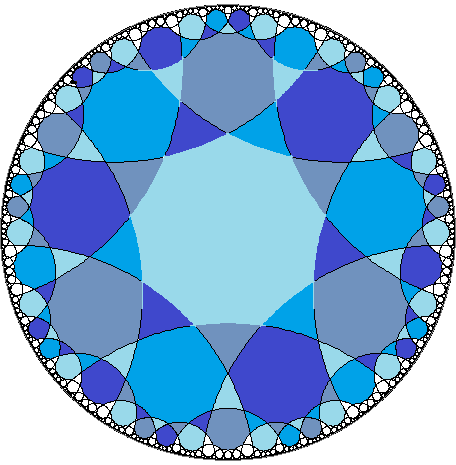} &
					\includegraphics[height=1.4in]{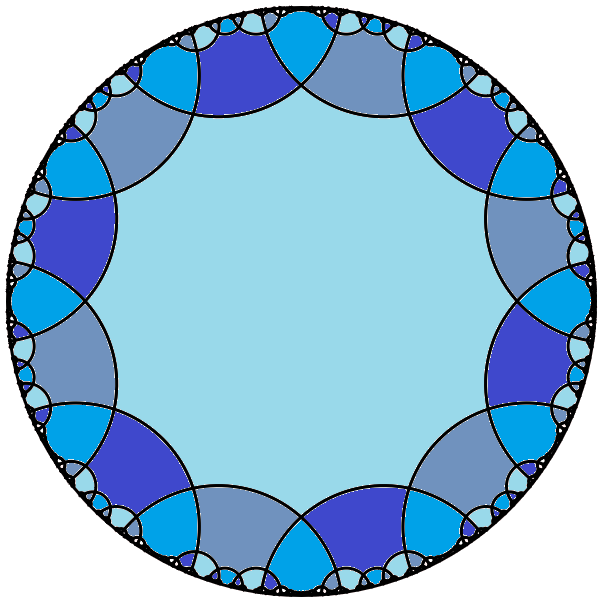}&
					\includegraphics[height=1.4in]{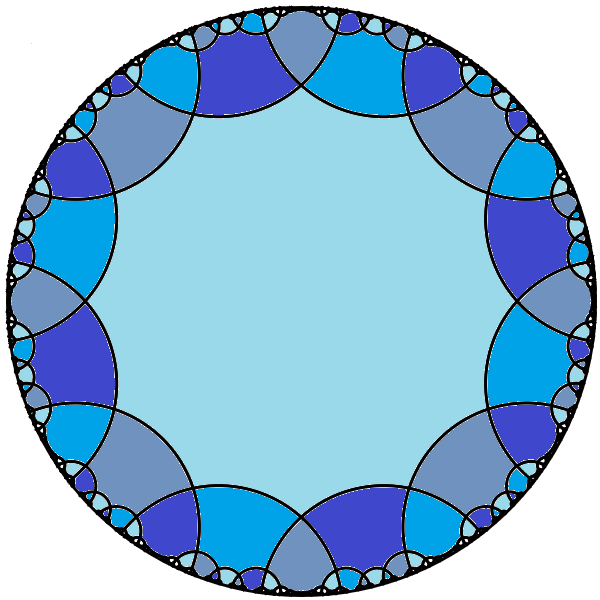} \\
					(9.3.9.3) & (6.12.6.12)_1 & (6.12.6.12)_2
				\end{array}$
				\caption{Some perfect precise 4-colorings of the tiling $(p.q.p.q)$.}
				\label{fig:f4.6}
			\end{figure} 
			
			Another semiregular tiling with symmetry group $G={\ast}pq2$ is the tiling $\T_2=(p.4.q.4)$, where $p\neq q$.  
			There is a $p$-fold ($q$-fold) rotation symmetry of $\T_2$ about the center of each $p$-gon ($q$-gon),
			and a $2$-fold rotation symmetry about the center of every square.
			The action of $G$ on $\T_2$ forms at most 3 orbits of tiles: the set of squares, the set of $p$-gons, and the set of $q$-gons.  
			No two $p$-gons ($q$-gons) in $\T_2$ share a common vertex nor a common edge, and  
			each $p$-gon ($q$-gon) is surrounded by $p$ ($q$) adjacent squares.  
			
			We use a similar approach as for the tiling $\T_1$ to obtain perfect precise 4-colorings of the tiling~$\T_2$. 
			We start with any $p$-gon $t$ in $\T_2$ and assign it the color $1$.
			There are two ways to assign colors to the $p$ squares adjacent to $t$: 
			alternating using two colors, say $2$ and $3$,
			or cyclically using the three colors~$2$, $3$, and $4$.
			The color assignment of the squares will force the color assignment of the $p$ $q$-gons around $t$.
			Afterwards, we color the $q$ $p$-gons and $q$ squares surrounding each $q$-gon around $t$.
			If the $q$-gon $t'$ attached to $t$ has color $2$ (resp.~$3$ and $4$), then there are two possibilities: 
			assign color~$1$ to every $p$-gon attached to $t'$ and assign colors $3$ and $4$ (resp.~$2$ and $4$, and $2$ and $3$) alternately to the squares around $t'$,
			or assign colors $1,3,4$ (resp.~$1,2,4$ and $1,2,3$) to the $p$-gons and squares in some repeating manner.
			The color assignment of the succeeding layers of $p$-gons, $q$-gons, and squares around $t$ are then determined inductively.
			We list the possibilities below and summarize them in the proposition thereafter.
			
			\begin{enumerate}[(i)]
				\item If $p$ is even, then the squares around $t$ are colored $2$ and $3$ in succession while every $q$-gon attached to $t$ are colored $4$.
				
				If $q$ is even, then the $p$-gons surrounding each $q$-gon attached to $t$ are colored $1$
				while the surrounding squares are colored $2$ and $3$ in succession (see Fig.~\ref{fig:f4.7}(a)).
				In this case, the perfect precise $4$-coloring of $\T_2$ corresponds to the partition $O_1\cup O_2\cup O_3$,
				with $O_1$ the set of $p$-gons, $O_2$ the set of $q$-gons, and $O_3=\set{g(Jt)\mid g\in G}$ where 
				$t$ is the square stabilized by the $2$-fold rotation $PQ$ and $J=\gen{P,Q,RPR,RQR}$ of index $2$ in $G$.
				
				If $q$ is a multiple of $3$ then the $p$-gons and squares surrounding each $q$-gon attached to~$t$
				are colored $1,2,3$ in some repeating manner (see Fig.~\ref{fig:f4.7}(b)).
				Here, the perfect precise $4$-coloring of $\T_2$ corresponds to the partition $O_1\cup O_2$,
				with $O_1$ the set of $q$-gons and $O_2=\set{g(Jt_1\cup Jt_2)\mid g\in G}$ where
				$t_1$ is the $p$-gon stabilized by the $p$-fold rotation $QR$, $t_2$ is the square stabilized by the reflection $PRPRP$, and $J=\gen{PRPRP,PRQRP,Q,R}$ of index~$3$ in~$G$.
				
				\item If $p$ is a multiple of 3 and $q$ is even, then interchanging the values of $p$ and $q$ yields the case above.
				
				The case where $p$ and $q$ are odd multiples of $3$ does not give rise to a perfect precise $4$-coloring of $\T_2$.
				Indeed, since $p$ is an odd multiple of $3$, then the $p$ squares and $p$ $q$-gons around $t$ can only be colored $2,3,4$ in a cyclical fashion for the coloring to be precise.
				Consider a $q$-gon $t'$ attached to $t$ with color $4$.
				We can only color the $p$-gons and squares around $t'$ by $1,2,3$ in some repeating manner because $q$ is an odd multiple of $3$ (see Fig.~\ref{fig:f4.7}(c)).
				This will mean that the second $p$-gon adjacent to a square colored $2$ will be assigned color $3$.
				However, the same $p$-gon is attached to a $q$-gon colored 3, and thus the coloring obtained will not be precise.
			\end{enumerate}
		
			\begin{figure}[htb]
				\centering $\begin{array}{ccc}
					\includegraphics[width=1.2in]{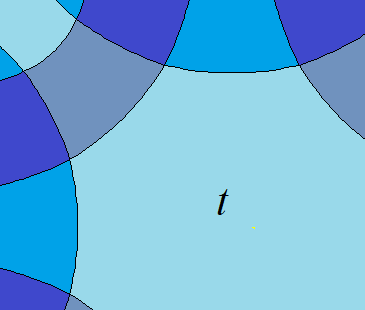}& 
					\includegraphics[width=1.2in]{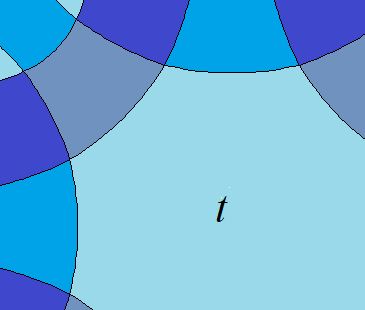}&
					\includegraphics[width=1.2in]{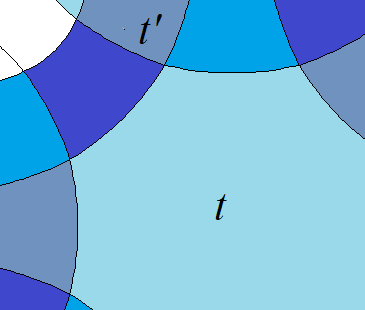}\\ 
					(a) & (b) & (c)  
				\end{array}$
				\caption{Possible color assignments for a perfect precise 4-coloring of the tiling $(p.4.q.4)$.} 
				\label{fig:f4.7}
			\end{figure}
			
			\begin{prop}\label{p4q4results} 
				Let $\T$ be the semiregular tiling $(p.4.q.4)$ with $p\neq q$.
				Then the number of perfect precise $4$-colorings of $\T$ is
				\begin{enumerate}
					\item one if and only if $p$ and $q$ are even and neither is a multiple of~3, or 
					one of $p$ and $q$ is even and the other is an odd multiple of 3;
					
					\item two if and only if $p$ and $q$ are even and exactly one of them is a multiple of 3; and
					
					\item three if and only if $p$ and $q$ are multiples of~6.
				\end{enumerate}
			\end{prop}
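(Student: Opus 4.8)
The plan is to convert the preceding case analysis into a count of distinct colourings. Recall that a perfect precise $4$-colouring is built outward from a fixed $p$-gon $t$ coloured $1$, and that the squares around any polygon must be coloured either \emph{alternately} with two colours, which forces that polygon to have an even number of sides, or \emph{cyclically} with three colours, which forces the number of sides to be a multiple of~$3$. Thus a colouring is determined by one choice around the $p$-gons (needing $p$ even, resp.\ $3\mid p$) and one around the $q$-gons (needing $q$ even, resp.\ $3\mid q$), giving four combinations. First I would record the three that occur: alternating/alternating yields the colouring I with the $p$-gons and the $q$-gons each monochromatic and the squares two-coloured, and needs $p,q$ both even; alternating/cyclic yields II, with the $q$-gons monochromatic and the $p$-gons and squares sharing three colours, and needs $p$ even and $3\mid q$; and, by the symmetry $(p.4.q.4)=(q.4.p.4)$, cyclic/alternating yields II$'$ (the mirror of II) and needs $3\mid p$ and $q$ even. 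Each is perfect: the partitions of the form \eqref{ohi} exhibited in cases~(i)--(ii) realise I and II, II$'$ is obtained from II by interchanging $p$ and $q$, and perfectness then follows from Theorem~\ref{gencase}.

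The hard part will be showing that the fourth combination, cyclic/cyclic, is \emph{never} precise; this is exactly what prevents a fourth colouring from appearing when $6\mid p$ and $6\mid q$, so it is indispensable for part~(3). Unlike the argument in case~(ii), the one I would give does not use parity. Suppose the squares around $t$ (coloured~$1$) run $2,3,4,\dots$ cyclically, so that the $q$-gons at the vertices of $t$ run $4,2,3,\dots$; fix the square $s_1$ coloured~$2$, whose two adjacent $q$-gons are then coloured~$4$ and~$3$. Let $t^{\ast}$ be the \emph{second} $p$-gon adjacent to $s_1$. The two outer vertices of $s_1$ meet the tiles $\{c(t^{\ast}),2,4,c(s')\}$ and $\{c(t^{\ast}),2,3,c(s'')\}$, so preciseness forces $c(t^{\ast})\in\{1,3\}$ and $c(t^{\ast})\in\{1,4\}$, whence $c(t^{\ast})=1$. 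But if the $q$-gon coloured~$4$ is also coloured cyclically, then the $p$-gons around it run through $1,2,3$ and $t^{\ast}$ is the neighbour of $t$ in that cycle, so $c(t^{\ast})\in\{2,3\}$ --- a contradiction. Hence a cyclically coloured $p$-gon forces its neighbouring $q$-gons to be coloured alternately, which requires $q$ even; in particular the cyclic/cyclic pattern is impossible.

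It remains to count, which is routine. Writing $a,b,c,d$ for the truth values of ``$p$ even'', ``$3\mid p$'', ``$q$ even'', ``$3\mid q$'', and $[\,\cdot\,]$ for the Iverson bracket, the number of colourings is the number of admissible patterns among I, II, II$'$, namely
\[
N=[a\wedge c]+[a\wedge d]+[b\wedge c].
\]
Colourings I, II, II$'$ are pairwise distinct whenever they coexist, since I has three colour orbits while II and II$'$ each have two and differ in whether the monochromatic orbit sits on the $q$-gons or on the $p$-gons (this uses $p\neq q$). I would then evaluate $N$ on the four types of each of $p$ and $q$ --- even with $3\mid$, even with $3\nmid$, odd with $3\mid$, and odd with $3\nmid$ --- and read off the answer: $N=3$ exactly when $6\mid p$ and $6\mid q$; $N=2$ exactly when $p,q$ are both even with exactly one divisible by~$3$; and $N=1$ exactly when either $p,q$ are both even with neither divisible by~$3$, or one of them is even and the other an odd multiple of~$3$. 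These are precisely items (3), (2), and (1) of the statement.
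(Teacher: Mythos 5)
Your proposal is correct and follows essentially the same route as the paper: the same dichotomy (alternating versus cyclic coloring of the squares around each type of polygon), the same three colorings I, II, II$'$ with the same divisibility conditions realized by the partitions in cases (i)--(ii), the same local contradiction around a square ruling out the cyclic/cyclic combination, and the same count. One refinement worth noting: you establish the impossibility of the cyclic/cyclic pattern without any parity hypothesis, which is precisely what part (3) needs to cap the count at three when $6\mid p$ and $6\mid q$, whereas the paper states that contradiction only for $p$ and $q$ odd multiples of $3$ (where the cyclic choice is forced), leaving the reader to observe that the argument never actually uses parity.
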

			
			Proposition \ref{p4q4results} is consistent with the unique perfect precise 4-coloring of the Archimedean tiling $(3.4.6.4)$ obtained in \cite{cr}. 
			The perfect precise 4-colorings of the semiregular hyperbolic tilings~$(p.4.q.4)$ for some values of $p$ and $q$ in Fig.~\ref{fig:f4.8} illustrates Proposition \ref{p4q4results}.
			The axes of the generators of the corresponding subgroups $J$ of $G$ are shown for the two distinct perfect precise $4$-colorings of the $(8.4.6.4)$ tiling.
			The semiregular spherical tiling $(5.4.3.4)$ admits no perfect precise coloring with 4 colors. 
			
			\begin{figure}[htb]
				\centering
				$\begin{array}{ccc}
					\includegraphics[height=1.5in]{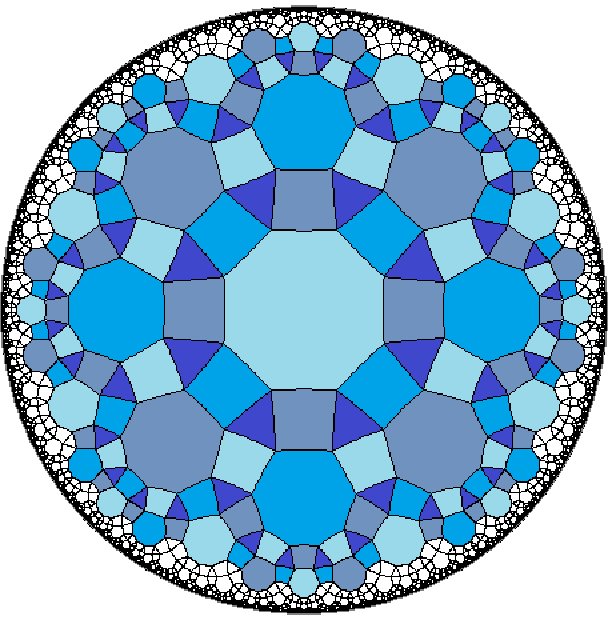}&
					\includegraphics[height=1.5in]{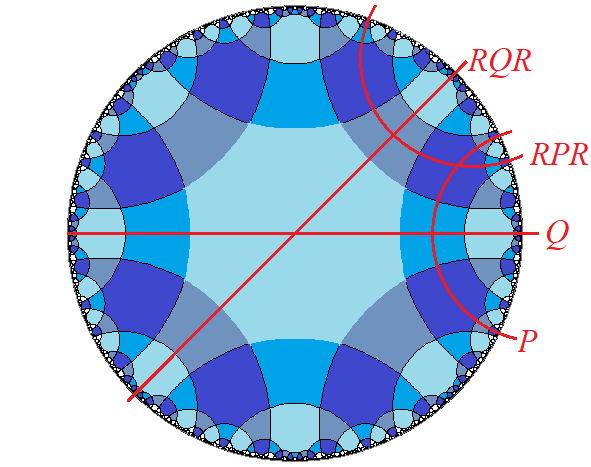}&
					\includegraphics[height=1.5in]{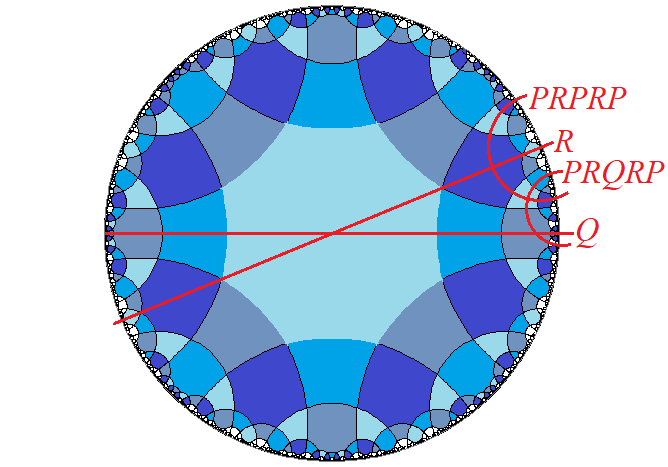}\\
					(8.4.3.4) & (8.4.6.4)_1  &  (8.4.6.4)_2\\
					\includegraphics[height=1.5in]{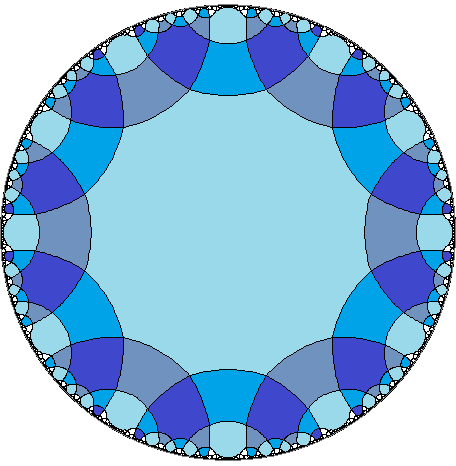}& 
					\includegraphics[height=1.5in]{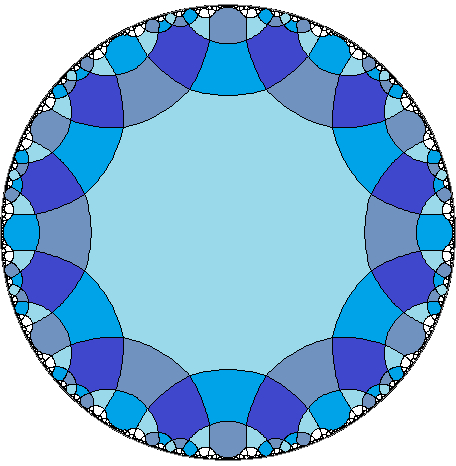}&
					\includegraphics[height=1.5in]{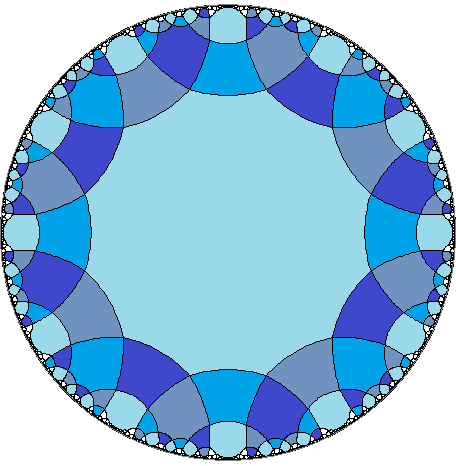}
					\\(12.4.6.4)_1 & (12.4.6.4)_2 & (12.4.6.4)_3
				\end{array}$
				\caption{Some perfect precise 4-colorings of the tiling $(p.4.q.4)$.}
				\label{fig:f4.8}
			\end{figure}
	
		\subsection{5-valent Semiregular Planar Tilings}
	
			We now determine the perfect and chirally perfect precise 5-colorings of the family of 5-valent tilings $\T=(3.3.p.3.q)$.
			If $p\neq q$, then the symmetry group of $\T$ is $G=pq2$. 
			The tiling $\T$ has a $p$-fold ($q$-fold) rotation symmetry about the center of each $p$-gon ($q$-gon),
			and a $2$-fold symmetry at the midpoint of the shared edge of any two adjacent triangles.
			The set of $p$-gons, the set of $q$-gons, and the set of triangles form distinct orbits under the action of $G$ on $\T$.  
			If $p=q$, then the symmetry group of $\T$ is $G'=p{\ast}2$. 
			In this case, $\T$ has a reflection symmetry along the shared edge of each pair of adjacent triangles. 
			The set of $p$-gons form one orbit under the action of $G'$ on $\T$, but is partitioned into two orbits under the action of the rotation group $H=pp2$. 
			It is easy to see that two $p$-gons that have a common vertex belong to different $H$-orbits.
			
			First, we show two things: (a) if $p=q$, then $\T$ does not admit any perfect precise $5$-coloring, and 
			(b) if $p\neq q$, then every $p$-gon is assigned the same color or every $q$-gon is assigned the same color in any perfect precise 5-coloring of $\T$. 
			
			Let $p=q$. Assign distinct colors to five tiles around a vertex of $\T$ as shown in Fig.~\ref{noppc}(a).
			Notice that $\T$ has a reflection symmetry $\alpha$ along the red line that passes through the shared edge of triangles $b$ and~$c$.
			If the $5$-coloring of $\T$ is to be perfect, then $\alpha$ must fix the color of triangle~$a$.
			This means that neither triangle $b$ nor $c$ should be assigned the same color as triangle $a$, otherwise the resulting $5$-coloring of $\T$ will not be precise.
			Hence, we are left with no choice but for the $p$-gon~$d$ to have the same color as triangle $a$.
			This yields a contradiction because $\alpha$ sends the $p$-gon~$d$ to a $p$-gon of a different color.  
			We have now shown that it is impossible to perfectly color~$\T$ with 5 colors such that the resulting coloring is precise whenever $p=q$. 
			
			\begin{figure}[htb]
				\centering
				$\begin{array}{cc}
					\includegraphics[height=1in]{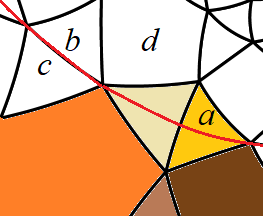}&
					\includegraphics[height=1in]{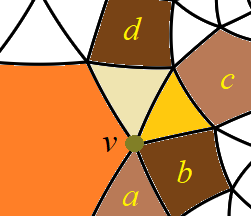}\\
					(a)&(b)\\
				\end{array}$
				\caption{(a) Nonexistence of a perfect precise 5-coloring of the tiling $(3.3.p.3.p)$; (b) Nonexistence of a transitive perfect precise 5-coloring of the tiling $(3.3.p.3.q)$.}
				\label{noppc}
			\end{figure}
			
			We now proceed to show that if $p\neq q$, then all $p$-gons have the same color or all $q$-gons have the same color in a perfect precise $5$-coloring of $\T$.
			
			Just as before, we start with five tiles meeting at a vertex $v$ that are assigned different colors (see Fig.~\ref{noppc}(b)). 
			For the 5-coloring of $\T$ to be precise, the $p$-gon~$c$ can be colored with either the color of triangle $a$ or the color orange of the $p$-gon adjacent to triangle $a$.
			However, the $p$-fold rotation symmetry $\beta$ about the center of the orange $p$-gon fixes the color orange.
			This means that if $p$-gon~$c$ is colored orange, then every $p$-gon in the tiling should be colored orange for the $5$-coloring of $\T$ to be perfect.
			On the other hand, if $p$-gon $c$ has the same color as triangle $a$, then the $q$-gon $d$ will be forced to have the same color brown of the $q$-gon $b$.
			As $\beta$ now fixes color brown, every $q$-gon in $\T$ will be colored brown so that the resulting $5$-coloring of $\T$ is perfect.
			This proves the claim.
			
			We now discuss how to obtain perfect precise $5$-colorings of $\T$ if $p\neq q$.
			We classify the triangles around a $p$-gon ($q$-gon) into two types: 
			those that share a common edge with the $p$-gon ($q$-gon) will be called type I triangles 
			while those that share only a vertex with the $p$-gon ($q$-gon) will be called type II triangles. 
			Suppose $t$ is a $p$-gon and $t'$ is a $q$-gon in $\T$ having a common vertex.		
			We assign different colors to the tiles around this vertex, in particular, we assign color $1$ to the $p$-gon~$t$, 
			color~$2$ to the triangle adjacent to both $t$ and $t'$, color $3$ to the $q$-gon, color $4$ to the triangle adjacent to~$t'$ but not to $t$, 
			and color $5$ to the remaining triangle adjacent to $t$ but not to~$t$'.
			From earlier, we have the following three cases.
			\begin{enumerate}[(i)]
				\item Suppose all $p$-gons are colored $1$ and all $q$-gons are colored $3$. 
				The type I triangles around~$t$ can be colored either $2$ and $5$ alternately or $2,5,4$ in a cyclical manner.
				In the former, $p$ must be even and all type II triangles around $t$ will be colored $4$. 
				This color assignment yields a type I triangle and a type II triangle around $t'$ that are colored $4$. 
				Hence, the type I and type~II triangles around $t'$ must be colored $2$, $4$, $5$ in some repeating manner which means that $q$ must be a multiple of $3$ (see Fig.~\ref{33p3qcases}(a)).
				The colors of the layers of triangles around $t$ are now uniquely determined, and the resulting $5$-coloring of $\T$ is precise by construction.
				In addition, the $5$-coloring is perfect because it corresponds to the partition $O_1\cup O_2\cup O_3$ of $\T$, where
				$O_1$ is the set of $p$-gons, $O_2$ is the set of $q$-gons, and
				$O_3=\set{g(Jt)\mid g\in G}$ where $t$ is any type II triangle around $t$ and $J=\gen{RQ,(PR)^3,PRQRPQ}$ of index $3$ in $G$.
				
				If instead the type I triangles around $t$ are colored $2,5,4$ in a cyclical manner, 
				then similar arguments show that $p$ must be a multiple of three and $q$ must be even for a perfect precise $5$-coloring of $\T$ to be obtained (see Fig.~\ref{33p3qcases}(b)).
				In fact, the resulting coloring is exactly the same coloring above where the roles of $p$ and $q$ are interchanged.
				
				\item Suppose all $p$-gons are colored $1$ but not all $q$-gons are colored $3$ (see Figs.~\ref{33p3qcases}(c) and (d)).
				The $q$-gon adjacent to the triangle colored $5$ should be colored $2$, otherwise, the $5$-coloring of $\T$ will not be precise or all $q$-gons will be colored $3$.
				This implies that the type I triangles and $q$-gons surrounding $t$ can be cyclically colored by the three colors $2,5,3$ or the four colors $2,5,4,3$.
				An analogous argument shows that in the former, $p$ and $q$ must be multiples of $3$ and the resulting perfect precise $5$-coloring of $\T$ 
				corresponds to the partition~$O_1\cup O_2$ of $\T$, with $O_1$ the set of $p$-gons and $O_2=\set{g(Jt_1\cup Jt_2)\mid g\in G}$ where
				$t_1$ is the $q$-gon stabilized by the $q$-fold rotation $PRQRPQRP$, $t_2$ is any type II triangle around $t$, 
				and $J=\gen{RQ,PRQRPQRP,PRQRQRPQ}$ of index $4$ in $G$.				
				
				If instead the type I triangles and $q$-gons around $t$ are assigned four colors,
				we obtain that $p$ should be a multiple of $4$ and $q$ should be even.
				The $5$-coloring of $\T$ obtained is perfect and precise that corresponds to the partition $O_1\cup O_2$,
				with $O_1$ the set of $p$-gons, $O_2=\set{g(Jt_1\cup Jt_2)\mid g\in G}$ where
				$t_1=t'$, $t_2$ is a type I or type II triangle around $t$ with color $3$, 
				and $J=\gen{(RQ)^4,PR,QRPRPQ}$ of index $4$ in~$G$.
				
				\item Suppose all $q$-gons are colored $3$ but not all $p$-gons are colored $1$ (see Figs.~\ref{33p3qcases}(e) and (f)).
				The $p$-gon adjacent to the triangle colored $4$ should be colored $2$, otherwise, all $p$-gons will be colored $1$.
				This implies that the type I triangles and $p$-gons surrounding $t$ can be colored either~$2$ and $5$ alternately or $2,5,4$ in a cyclical manner.
				Similar arguments show that to obtain a perfect precise $5$-coloring of $\T$,
				$p$ must be even and $q$ must be a multiple of $4$ in the former while $p$ and $q$ must be divisible by $3$ in the latter.
				In both instances, the coloring generated is the same coloring obtained from Case (ii) when the values of $p$ and $q$ are interchanged.
			\end{enumerate}
		
			\begin{figure}[htb]
				\centering
				$\begin{array}{ccc}
					\includegraphics[height=1in]{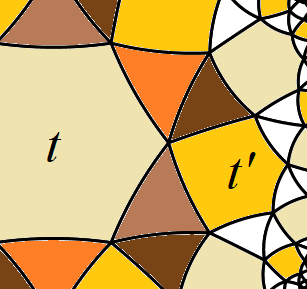}&
					\includegraphics[height=1in]{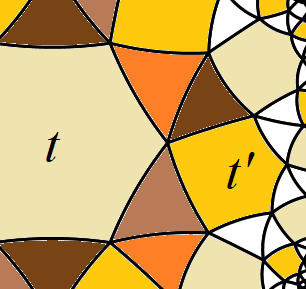}&
					\includegraphics[height=1in]{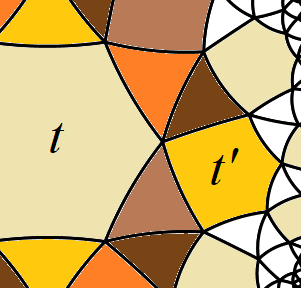}\\
					(a) & (b) & (c) \\
					\includegraphics[height=1in]{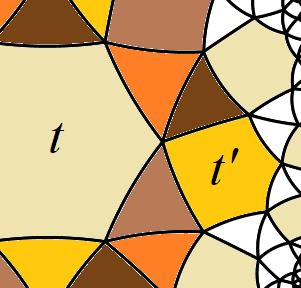}&
					\includegraphics[height=1in]{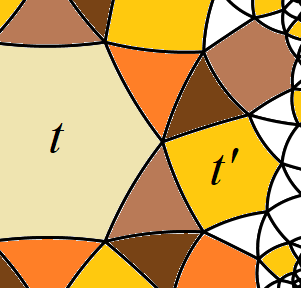}&
					\includegraphics[height=1in]{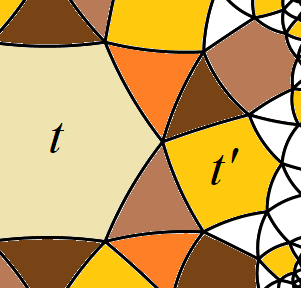}\\
					(d) & (e) & (f) \\
				\end{array}$
				\caption{Possible color assignments for a perfect precise 5-coloring of the tiling $(3.3.p.3.q)$, with $p\neq q$.}
				\label{33p3qcases}
			\end{figure}
			
			The following proposition summarizes the preceding discussion. 
			
			\begin{prop}\label{33p3qresults}
				Let $\T$ be the semiregular tiling $(3.3.p.3.q)$ with $p\neq q$.
				Then the number of perfect precise $5$-colorings of $\T$ is
				\begin{enumerate}
					\item one if and only if one of the following hold: 
					one of $p$ and $q$ is a multiple of $3$ but not of~$4$ and the other is even that is not a multiple of $4$,
					one of $p$ and $q$ is a multiple of $4$ but not of $3$ and the other is even that is not a multiple of $4$, or
					one of $p$ and $q$ is a multiple of $4$ but not of $3$ and the other is an odd multiple of $3$;
					
					\item two if and only if one of the following hold:
					one of $p$ and $q$ is a multiple of $12$ and the other is even that is neither a multiple of $3$ nor of $4$,
					both $p$ and $q$ are odd multiples of $3$,
					both $p$ and $q$ are multiples of $4$ but not of $3$, or
					one of $p$ and $q$ is a multiple of $6$ but not of $4$ and the other is a multiple of $4$ but not of $3$;
					
					\item three if and only if one of the following hold:
					one of $p$ and $q$ is a multiple of $6$ and the other is an odd multiple of $3$, or
					one of $p$ and $q$ is a multiple of $12$ and the other is a multiple of~$4$ but not of $3$;
					
					\item four if and only if both $p$ and $q$ are multiples of $6$ but not of $4$;
					
					\item five if and only if one of $p$ and $q$ is a multiple of $12$ and the other is a multiple of $6$ but not of $4$; and
					
					\item six if and only if both $p$ and $q$ are multiples of $12$.				
				\end{enumerate}
			\end{prop}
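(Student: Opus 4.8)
The plan is to collate the case analysis carried out in Cases~(i)--(iii) above, each of which exhibits a perfect precise $5$-coloring of $\T=(3.3.p.3.q)$ together with the arithmetic condition on $p$ and $q$ under which it exists, and then to convert the resulting list of existence conditions into the six mutually exclusive divisibility regimes claimed. Since $p\neq q$, the symmetry group is the orientation-preserving group $G=pq2$, so every symmetry is direct and the notions of perfect and chirally perfect coincide; it therefore suffices to count colorings whose colors are permuted by all of $G$, and the partitions displayed in Cases~(i)--(iii) already confirm via Theorem~\ref{gencase} that each listed coloring is indeed perfect.

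First I would record that the enumeration is complete. The dichotomy established above --- that for $p\neq q$ every perfect precise $5$-coloring has all $p$-gons monochromatic or all $q$-gons monochromatic --- splits into exactly three possibilities: all $p$-gons share color $1$ and all $q$-gons share color $3$ (Case~(i)); all $p$-gons share color $1$ but the $q$-gons do not (Case~(ii)); and all $q$-gons share color $3$ but the $p$-gons do not (Case~(iii)). These cover $A\vee B$, where $A$ and $B$ are the two monochromaticity conditions, so no coloring is missed. Assembling the master list, Case~(i) yields one coloring precisely when $2\mid p$ and $3\mid q$ (the triangles edge-adjacent to each $p$-gon alternate two colors) and, via the interchanged construction, a second precisely when $3\mid p$ and $2\mid q$; Case~(ii) yields one coloring when $3\mid p$ and $3\mid q$ and another when $4\mid p$ and $2\mid q$; and Case~(iii), being Case~(ii) with the roles of $p$ and $q$ exchanged, yields one coloring when $3\mid p$ and $3\mid q$ and another when $2\mid p$ and $4\mid q$. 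The number of perfect precise $5$-colorings is therefore
\[
N(p,q)=\mathbbm{1}[2\mid p\wedge 3\mid q]+\mathbbm{1}[3\mid p\wedge 2\mid q]+2\,\mathbbm{1}[3\mid p\wedge 3\mid q]+\mathbbm{1}[4\mid p\wedge 2\mid q]+\mathbbm{1}[2\mid p\wedge 4\mid q].
\]

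With this formula in hand, verifying claims~(1)--(6) is a finite check: for each prescribed pattern of the divisibilities $2\mid p,3\mid p,4\mid p$ and $2\mid q,3\mid q,4\mid q$ (using $4\mid x\Rightarrow 2\mid x$), one evaluates $N(p,q)$ and matches it against the listed value, confirming both directions of each equivalence. For instance $p,q\equiv 0\imod 6$ gives $N=4$ (claim~(4)), while $p\equiv0\imod{12}$ with $q\equiv0\imod 6$, $4\nmid q$, gives $N=5$ (claim~(5)).

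The hard part will be the distinctness bookkeeping that justifies the coefficients in $N(p,q)$, since the $p\leftrightarrow q$ relabeling is a cosmetic symmetry of the family but not an identification of colorings for a fixed pair $(p,q)$. In particular I must argue that the two Case~(i) constructions give genuinely different colorings when both $p$ and $q$ are multiples of $6$ (one alternates the triangles edge-adjacent to each $p$-gon, the other those edge-adjacent to each $q$-gon), and --- the most delicate point --- that the three-color coloring from Case~(ii) and the three-color coloring from Case~(iii), although both requiring $3\mid p$ and $3\mid q$, are distinct: the former has all $p$-gons monochromatic with the $q$-gons varying, the latter has all $q$-gons monochromatic with the $p$-gons varying. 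This non-identification is exactly the coefficient $2$ on $\mathbbm{1}[3\mid p\wedge 3\mid q]$, and it is responsible for the jump to four colorings in claim~(4) and to five and six in claims~(5) and~(6). Once these identifications and non-identifications are settled, the six divisibility regimes follow by the routine evaluation of $N(p,q)$ indicated above.
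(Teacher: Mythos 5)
Your proposal is correct and takes essentially the same route as the paper: the paper's own ``proof'' is the sentence that the proposition summarizes the preceding case analysis (i)--(iii), and your write-up is exactly that collation, with the counting formula $N(p,q)$ and the distinctness observations (the two Case~(i) constructions, and Case~(ii) versus Case~(iii) under $3\mid p$, $3\mid q$) making explicit the bookkeeping the paper leaves implicit. The only nitpick is that your first sample evaluation should read ``$p$ and $q$ multiples of $6$ but not of $4$'' (as in claim~(4)), since $p\equiv q\equiv 0 \pmod 6$ alone also admits the multiple-of-$12$ subcases where $N=5$ or $6$.
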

			
			The existence of three perfect precise 5-colorings for the tiling $(3.3.3.3.6)$ in \cite{cr} agrees with Proposition~\ref{33p3qresults}.
			Examples of perfect precise 5-colorings of the tiling $(3.3.p.3.q)$ with $p\neq q$ are presented in Fig.~\ref{33p3qexamples}.
			
			\begin{figure}[htb]
				\centering
				$\begin{array}{ccc}
					\includegraphics[width=1.35in]{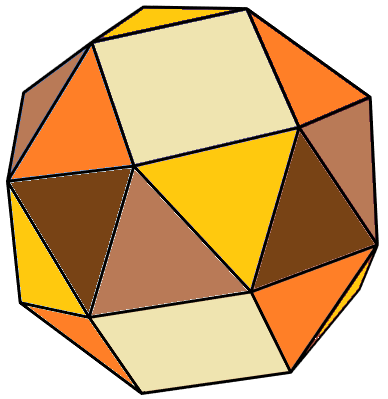}&
					\includegraphics[width=1.4in]{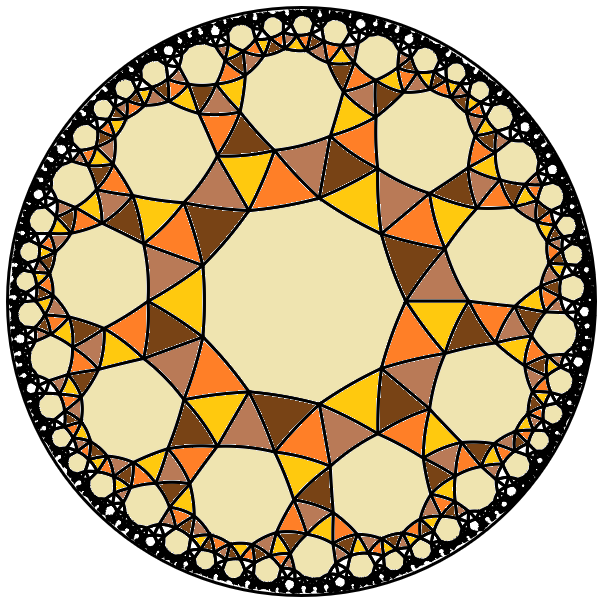}&
					\includegraphics[width=1.4in]{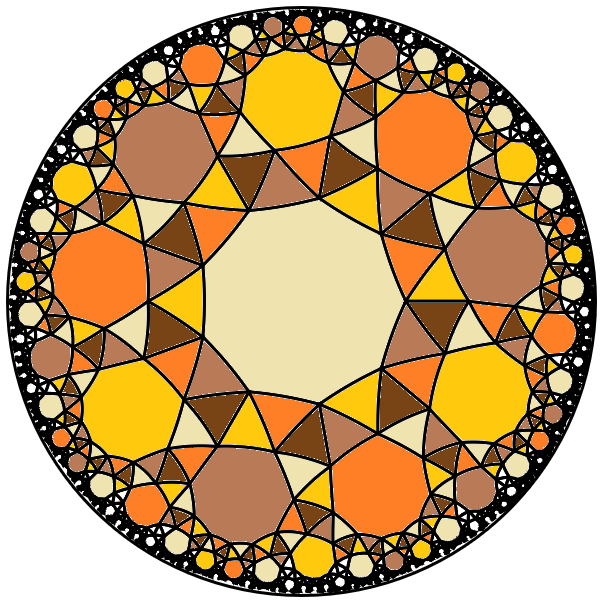}\\
					(3.3.3.3.4)&(3.3.3.3.9)_1&(3.3.3.3.9)_2\\ 
					\includegraphics[width=1.4in]{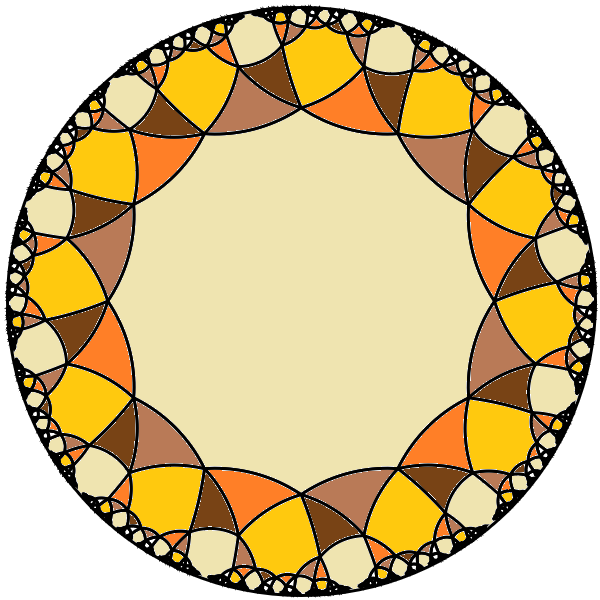}&
					\includegraphics[width=1.4in]{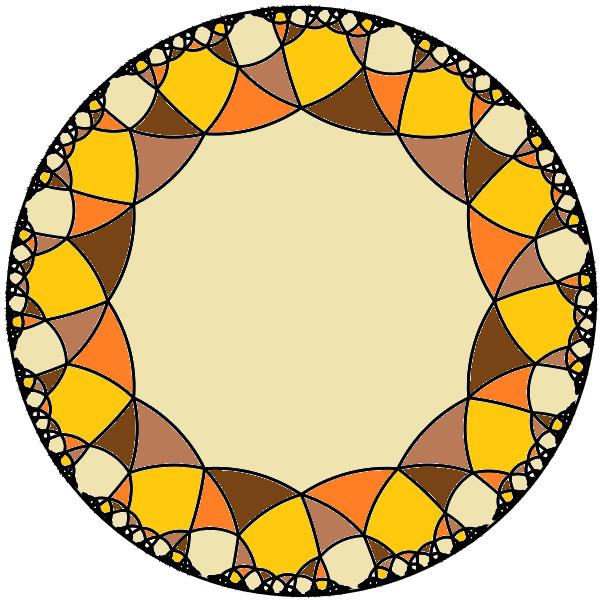}&
					\includegraphics[width=1.4in]{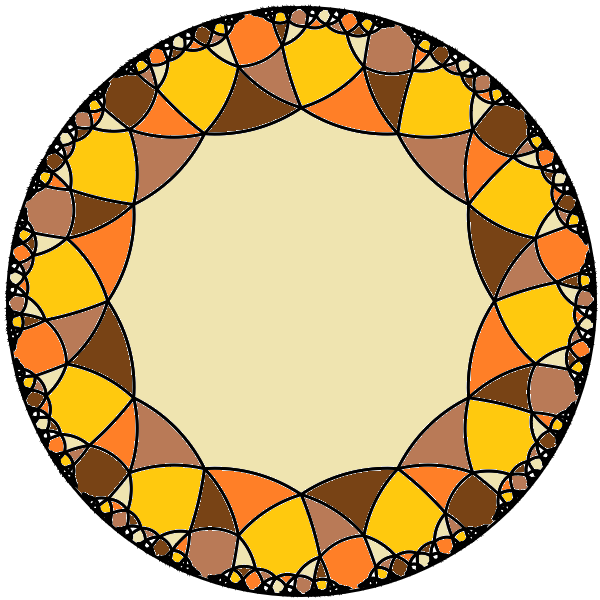}\\
					(3.3.6.3.12)_1&(3.3.6.3.12)_2&(3.3.6.3.12)_3
				\end{array}$
				
				$\begin{array}{cc}
					\includegraphics[width=1.4in]{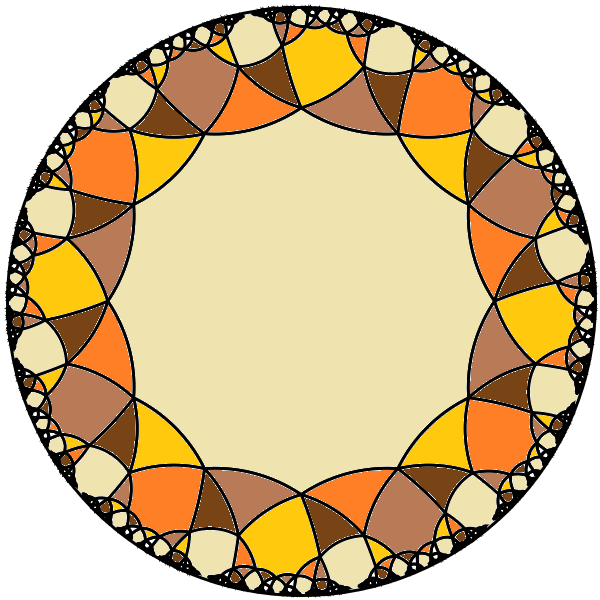}&
					\includegraphics[width=1.4in]{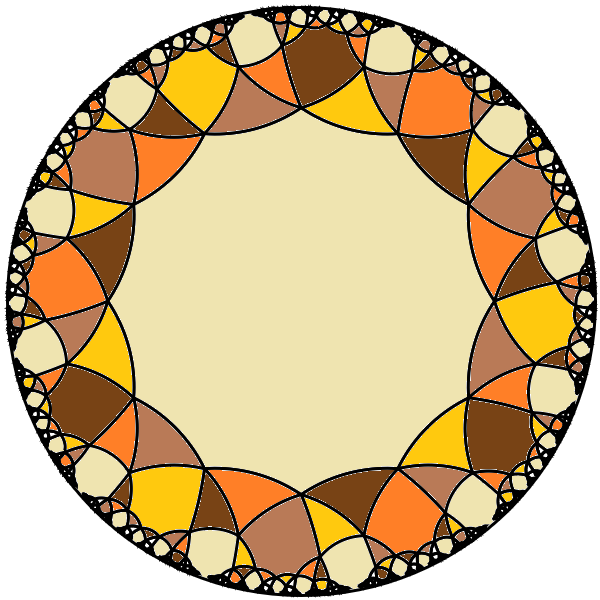}\\
					(3.3.6.3.12)_4&(3.3.6.3.12)_5
				\end{array}$
				\caption{Some perfect precise 5-colorings of the tiling $(3.3.p.3.q)$, with $p \neq q$.}  
				\label{33p3qexamples}
			\end{figure}
			
			Recall that if $p=q$ then the tiling $\T$ has no perfect precise $5$-coloring.
			Nevertheless, if we consider instead the action of the rotation group $H=pp2$ on the tiling $\T$,
			then some of the discussion above is applicable and results to chirally perfect precise 5-colorings of $\T$.
			We state this explicitly in the next proposition.
			
			\begin{prop}\label{33p3presults}
				The number of chirally perfect precise $5$-colorings of the semiregular tiling ${\T=(3.3.p.3.p)}$ is
				\begin{enumerate}
					\item one if and only if $p$ is an odd multiple of $3$ or $p$ is a multiple of $4$ but not of $3$;
					
					\item two if and only if $p$ is a multiple of $6$ but not of $4$; and
					
					\item three if and only if $p$ is a multiple of $12$.				
				\end{enumerate}
			\end{prop}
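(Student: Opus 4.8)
The plan is to reduce the statement to the analysis already carried out for $(3.3.p.3.q)$ with $p\neq q$, but run with the rotation group in place of the full symmetry group. When $p=q$ the full symmetry group is $G'=p{\ast}2$, and its rotation subgroup $H=pp2$ acts on $\T=(3.3.p.3.p)$ in a way that is group-theoretically and combinatorially identical to the action of $pq2$ on $(3.3.p.3.q)$ with $q$ specialized to $p$: the $p$-gons split into two $H$-orbits $O_1,O_2$ (two $p$-gons sharing a vertex lying in different orbits), and the triangles form a single $H$-orbit. Since we have already shown that $\T$ admits no perfect precise $5$-coloring, every precise $5$-coloring of $\T$ whose colors are permuted by $H$ is necessarily \emph{chirally} perfect. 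It therefore suffices to enumerate the precise $5$-colorings of $\T$ permuted by $H$, which is exactly the task solved for $(3.3.p.3.q)$.

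First I would re-establish the dichotomy that in any such coloring one of the orbits $O_1$, $O_2$ is monochromatic. The argument given for $p\neq q$ uses only the $p$-fold rotation about the centre of a polygon together with the assumption that the direct symmetries permute the colors; since these rotations lie in $H$, it transfers verbatim, with the part played by the $q$-gons now played by the second orbit of $p$-gons. With this in hand, Cases (i)--(iii) of the $p\neq q$ analysis apply unchanged once $q$ is set equal to $p$, producing three candidate colorings up to congruence: the coloring $A$ in which both $O_1$ and $O_2$ are monochromatic (Case (i)); the coloring $B$ in which one orbit is monochromatic and the remaining polygons and triangles are colored by three colors cyclically (Case (ii)/(iii), first subcase); and the coloring $C$ in which one orbit is monochromatic and the rest are colored by four colors cyclically (Case (ii)/(iii), second subcase). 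Specializing the divisibility conditions obtained there to $q=p$ gives the existence criteria: $A$ exists iff $6\mid p$ (needing $p$ even and $3\mid p$), $B$ exists iff $3\mid p$, and $C$ exists iff $4\mid p$.

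The one genuinely new point, compared with $(3.3.p.3.q)$, is the effect of the reflection $\rho\in G'\setminus H$: it interchanges $O_1$ and $O_2$ and hence carries each Case (ii) coloring to the corresponding Case (iii) coloring. For $p\neq q$ no such tiling symmetry exists, so Cases (ii) and (iii) contribute distinct colorings; here $B$ and its reflection (respectively $C$ and its reflection) are congruent via $\rho$ and must be counted once. As $A$, $B$, $C$ are pairwise non-congruent (distinguished by whether both or only one polygon-orbit is monochromatic, and by the use of three versus four colors on the non-monochromatic part), the number of chirally perfect precise $5$-colorings equals the number of the criteria ``$6\mid p$'', ``$3\mid p$'', ``$4\mid p$'' that hold. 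Checking overlaps yields the six cases: exactly one holds iff $p$ is an odd multiple of $3$, or $4\mid p$ but $3\nmid p$; exactly two hold iff $6\mid p$ but $4\nmid p$; and all three hold iff $12\mid p$.

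Finally, for each surviving coloring I would exhibit the corresponding partition of the form \eqref{ohi} with $H=pp2$, displaying a subgroup $J_i$ of $H$ that contains the relevant stabilizers $\stab_H(t_{i,j})$ and whose indices sum to $5$; by Theorem~\ref{gencase} this certifies that the colors are permuted by $H$, and combined with the nonexistence of a perfect precise $5$-coloring it certifies chirality. The main obstacle I anticipate is not the counting but the verification that the $p\neq q$ derivation transfers without hidden use of $p\neq q$: one must confirm that the monochromatic-orbit dichotomy and every forcing step in Cases (i)--(iii) rely only on rotations (so that they hold for $H$), and that the congruence induced by $\rho$ is applied consistently, so that the Case (ii)/(iii) pairs are identified exactly once and no coloring is over- or under-counted.
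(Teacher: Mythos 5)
Your proposal is correct and takes essentially the same route as the paper: the paper obtains Proposition~\ref{33p3presults} precisely by rerunning the $(3.3.p.3.q)$, $p\neq q$, analysis with the rotation group $H=pp2$ in place of the full symmetry group $p{\ast}2$, specializing $q=p$ so that the three coloring types exist under the conditions $6\mid p$, $3\mid p$, and $4\mid p$. Your explicit point that the reflection in $G'\setminus H$ swaps the two polygon orbits and therefore identifies the mirror-paired colorings (so each type counts once) is exactly what the paper handles implicitly in its remark that the two reflection-related $5$-colorings of $(3.3.4.3.4)$ from the earlier literature are to be regarded as the same.
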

			
			Note that two chirally perfect precise colorings of the tiling $(3.3.4.3.4)$ using 5 colors were obtained in \cite{cr}, each of which reflects into the other.  
			These two colorings correspond to the same partition of the tiling and in effect, these colorings are the same.
			This is in agreement with Proposition~\ref{33p3presults}. 
			Fig.~\ref{33p3qexampleschiral} shows the two chirally perfect precise 5-colorings of the tiling $(3.3.6.3.6)$. 
			
			\begin{figure}[htb]
				\centering
				$\begin{array}{cc}
					\includegraphics[width=1.4in]{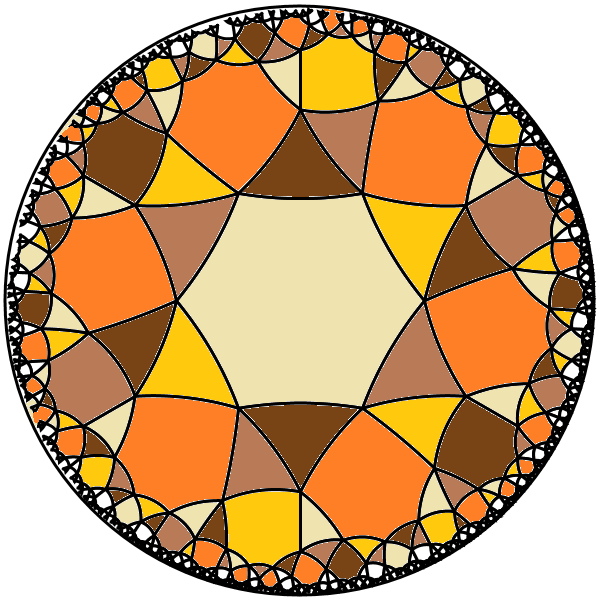}&
					\includegraphics[width=1.4in]{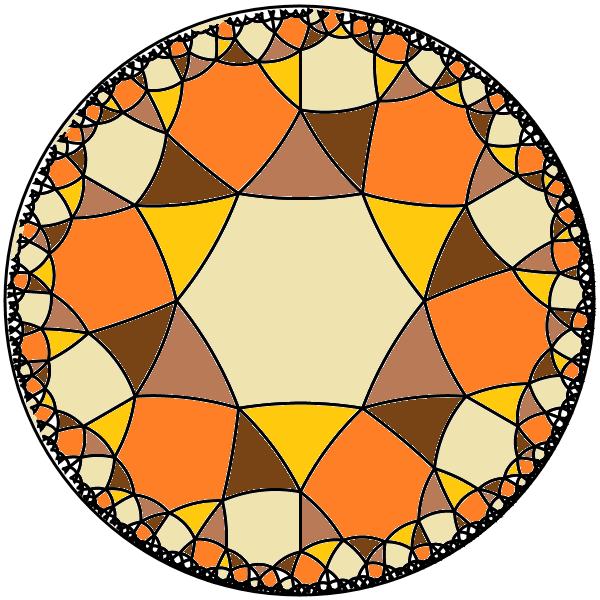}\\
					(3.3.6.3.6)_1&(3.3.6.3.6)_2
				\end{array}$
				\caption{Some chirally perfect precise 5-colorings of the tiling $(3.3.p.3.p)$.}  
				\label{33p3qexampleschiral}
			\end{figure}
	
		\subsection{6-valent Semiregular Planar Tilings}
	
			We finish this section with a discussion on the perfect precise 6-colorings of the family $\T=(3.p.3.q.3.q)$ of 6-valent semiregular tilings.
			
			If $p=q=3$ then $\T=(3^6)$ is the regular tessellation of the Euclidean plane by equilateral triangles.
			Its unique perfect precise $6$-coloring has been determined in~\cite{ri1}.
			If $p=q$ and $p>3$, then the symmetry group of $\T=(3.p.3.p.3.p)$ is $G={\ast}p33$.
			Every vertex of $\T$ is the center of a $3$-fold rotation.
			The stabilizer of every $p$-gon (triangle) is a dihedral group $D_p$ ($D_3$).
			Two orbits of tiles are formed under the action of $G$ on $\T$, 
			namely the set of $p$-gons and the set of triangles. 
			
			In Fig.~\ref{3p3p3phowto}, the six tiles meeting at vertex $v$ of $\T$ are assigned different colors: the $p$-gon $t$ is colored $1$,
			and the succeeding triangles and $p$-gons around $v$ are colored $2$, $3$, $4$, $5$, $6$, in that order, such that triangle $t'$ is colored $2$.
			Observe that $\T$ has a reflection symmetry $\alpha$ along the line that passes through the center of $p$-gon $t$ and vertex $v$.
			For the $6$-coloring of $\T$ to be precise, the $p$-gon~$a$ can only be assigned colors $1$, $2$, or $6$. 
			However, notice that $\alpha$ swaps colors $2$ and $6$.
			This means that $p$-gon~$a$ must get color~$1$ so that the resulting coloring of $\T$ will be perfect.
			
			\begin{figure}[htb]
				\centering				
				\includegraphics[height=1in]{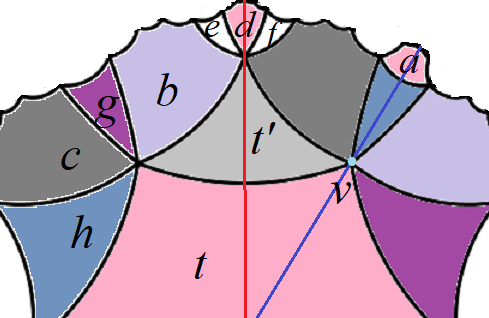}
				\caption{Obtaining a perfect precise $6$-coloring of the tiling $(3.p.3.p.3.p)$.}
				\label{3p3p3phowto}
			\end{figure}
			
			Let us denote by $\gamma$ the $3$-fold rotation about vertex $v$. 
			Since $\gamma$ is supposed to permute the colors in the coloring of $\T$,
			we conclude that $\gamma$ maps colors $1$, $3$, and $5$ to $3$, $5$, and $1$, respectively.
			This implies that the $p$-gon $b$ should be colored $5$. 
			
			We now consider the reflection symmetry $\beta$ of $\T$ along the line that passes through the centers of $p$-gon $t$ and triangle $t'$.
			Notice that $\beta$ fixes colors $1$ and $2$, and swaps $3$ and $5$.
			This implies that the $p$-gon $c$ should be colored $3$ and $p$-gon $d$ should get color $1$.
			Now, $\beta$ either fixes or swaps colors $4$ and $6$.
			However, triangles $e$ and $f$ are mapped to each other by the reflection~$\beta$.
			For the resulting $6$-coloring of $\T$ to be precise, we conclude that $\beta$ swaps colors $4$ and~$6$.
			Consequently, triangles $g$ and $h$ must be respectively assigned colors $6$ and $4$.
			This means that the $p$ triangles sharing an edge with $p$-gon $t$ should be colored $2$, $4$, and $6$ in a cyclical manner.
			The coloring of $\T$ is now uniquely determined and we see that $p$ must be a multiple of $3$ for a perfect precise $6$-coloring of $\T$ to be obtained.
			The unique perfect precise $6$-coloring of $\T$ generated corresponds to the partition $O_1\cup O_2$
			with $O_1=\set{g(J_1t_1)\mid g\in G}$, where $t_1$ is the $p$-gon stabilized by the $p$-fold rotation $QR$ and 
			$J_1=\gen{PRQP,Q,R}$ of index $3$ in $G$,
			and $O_2=\set{g(J_2t_2)\mid g\in G}$, where $t_2$ is the triangle stabilized by the $3$-fold rotation $PR$ and
			$J_2=\gen{P,QRPRPQ,R}$ of index $3$ in~$G$. 		
			Thus we obtain the following proposition.
			
			\begin{prop}\label{3p3p3presults}
				The semiregular tiling $(3.p.3.p.3.p)$ has exactly one perfect precise 6-coloring if and only if $p$ is a multiple of $3$.
			\end{prop}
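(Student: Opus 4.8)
The plan is to prove both directions by analyzing the coloring that is forced in the first layer of tiles around a single $p$-gon, and then to certify perfectness by exhibiting the coloring in the partition form of Theorem~\ref{gencase}. Since the action of $G={\ast}p33$ is transitive on vertices, it suffices to reason locally at one vertex and propagate the constraints outward.

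For necessity and uniqueness, I would assume a perfect precise $6$-coloring exists, fix a vertex $v$, and assign the six tiles meeting at $v$ the distinct colors $1,\ldots,6$, with the $p$-gon $t$ colored $1$ and the triangle $t'$ colored $2$. The crux is that the three symmetries stabilizing $v$ or the pair $(t,t')$---the reflection $\alpha$ through the center of $t$ and $v$, the $3$-fold rotation $\gamma$ about $v$, and the reflection $\beta$ through the centers of $t$ and $t'$---must each permute the colors. Combining this with precision (no repeated color at any vertex) forces, step by step, the colors of the successive $p$-gons and triangles adjacent to $t$: $\alpha$ swaps colors $2$ and $6$, which, given the precision-permitted options for the adjacent $p$-gon, forces it to reuse color $1$; $\gamma$ cycles $1,3,5$; and $\beta$ fixes $1,2$, swaps $3,5$, and---because it interchanges two triangles sharing a vertex---must also swap $4$ and $6$. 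The net effect is that the $p$ triangles sharing an edge with $t$ receive the colors $2,4,6$ in cyclic order. This cyclic assignment closes up consistently around $t$ exactly when $3\mid p$, yielding both the necessity of the divisibility condition and the fact that, when it holds, the whole coloring is determined with no remaining freedom.

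For sufficiency, assuming $3\mid p$, I would present this unique candidate coloring and verify perfectness via Theorem~\ref{gencase}. The coloring partitions $\T$ into two color orbits $O_1\cup O_2$, where $O_1=\set{g(J_1t_1)\mid g\in G}$ with $t_1$ the $p$-gon stabilized by the $p$-fold rotation $QR$ and $J_1=\gen{PRQP,Q,R}$, and $O_2=\set{g(J_2t_2)\mid g\in G}$ with $t_2$ the triangle stabilized by the $3$-fold rotation $PR$ and $J_2=\gen{P,QRPRPQ,R}$. The verification reduces to checking that $J_i$ contains $\stab_G(t_i)$ and has index $3$ in $G$ for $i=1,2$; by Theorem~\ref{gencase} this guarantees that each orbit carries exactly three of the six colors and that the two orbits share none, so every element of $G$ permutes the six colors.

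I expect the main obstacle to be the forcing argument in the second step---specifically, ruling out every alternative assignment in the first layer around $t$ and justifying that the local constraints propagate to a globally consistent coloring rather than merely a finite patch. The delicate point is the deduction that $\beta$ must swap colors $4$ and $6$: this is dictated by precision at the vertex shared by the two $\beta$-conjugate triangles, and it is precisely this swap that produces the $2,4,6$ cycle and thereby injects the divisibility $3\mid p$ into the argument.
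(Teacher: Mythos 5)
Your proposal follows essentially the same route as the paper: the same local forcing argument at a vertex using the reflection $\alpha$, the $3$-fold rotation $\gamma$, and the reflection $\beta$ (including the key deduction that $\beta$ must swap colors $4$ and $6$, producing the cyclic $2,4,6$ pattern and hence $3\mid p$), followed by the identical certification of perfectness via the partition $O_1\cup O_2$ with $J_1=\gen{PRQP,Q,R}$ and $J_2=\gen{P,QRPRPQ,R}$. The only omission is the degenerate case $p=3$, where the tiling is the regular $(3^6)$ tiling with the larger symmetry group $\ast 632$ rather than $\ast p33$; the paper handles this case by citation rather than by the forcing argument.
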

			
			Fig.~\ref{3p3p3pexamples} shows the perfect precise $6$-colorings of the tilings $(3.6.3.6.3.6)$ and $(3.9.3.9.3.9)$.
			
			\begin{figure}[htb]
				\centering
				$\begin{array}{cc}
					\includegraphics[width=1.4in]{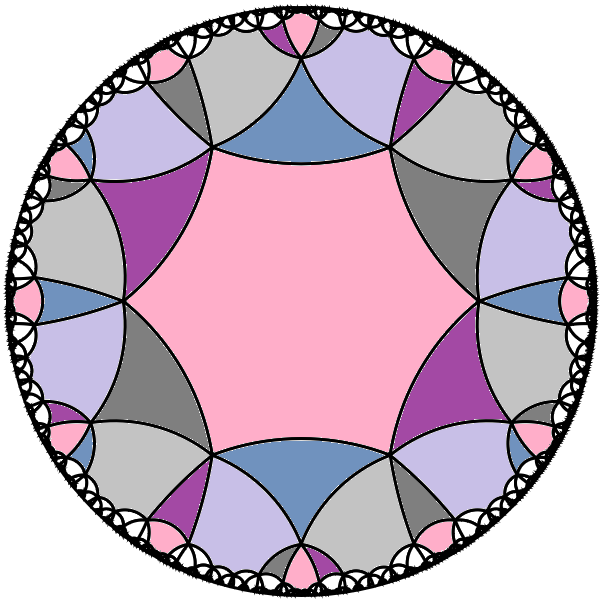}&
					\includegraphics[width=1.4in]{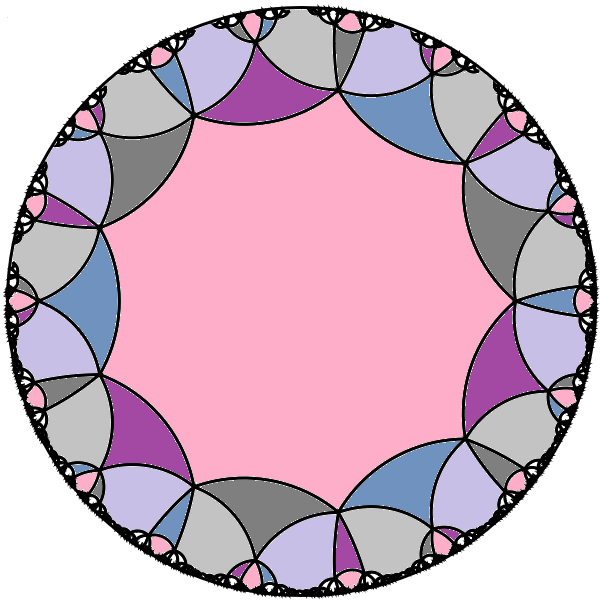}\\
					(3.6.3.6.3.6) & (3.9.3.9.3.9)\\
				\end{array}$
				\caption{Some perfect precise $6$-colorings of the tiling $(3.p.3.p.3.p)$.}
				\label{3p3p3pexamples}
			\end{figure}
			
			If $p\neq q$, then the symmetry group of $\mathcal{T}=(3.p.3.q.3.q)$ is $G=q{\ast}p$, 
			a subgroup of index $2$ of the triangle group ${\ast}(2p)q2$ (where $P$ still denotes the reflection opposite angle $\pi/(2p)$).
			Three orbits of tiles are formed under the action of $G$ on $\T$: 
			the set of $p$-gons, the set of $q$-gons, and the set of triangles.
			The stabilizer of every $p$-gon ($q$-gon) in $\T$ is a dihedral group $D_p$ (cyclic group $C_q$).
			We also note that no two $p$-gons are adjacent to each other, two $q$-gons are attached to each vertex of every $p$-gon, 
			and $q$ $q$-gons are attached to every $q$-gon at the vertices. 
			
			As earlier, we start with six tiles meeting at vertex $v$ of $\T$ with the following color assignment: the $p$-gon $t$ is colored $1$,
			and the succeeding triangles and $q$-gons around $v$ are colored $2$, $3$, $4$, $5$,~$6$, in that order, such that $q$-gon $t'$ is colored $3$ (see Fig.~\ref{3p3q3qhowto}).
			The tiling $\T$ still has the reflection symmetry $\alpha$ along the line that passes through the center of $p$-gon $t$ and vertex $v$.
			Using similar arguments, we conclude that the $p$-gon~$a$, and consequently every $p$-gon in $\T$, should be colored~$1$.
			
			\begin{figure}[htb]
				\centering
				\includegraphics[width=1.3in]{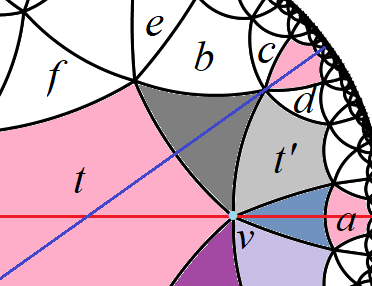}\\
				\caption{Obtaining a perfect precise 6-coloring of the tiling $(3.p.3.q.3.q)$, with $p\neq q$.}
				\label{3p3q3qhowto}
			\end{figure}
			
			For the resulting $6$-coloring of $\T$ to be precise, the $q$-gon $b$ should be assigned colors $4$, $5$, or $6$.
			We consider these cases separately, and we make use of the 
			reflection symmetry $\beta$ of $\T$ along the line that passes through the centers of $p$-gon $t$ and the triangle colored $2$,
			and the $q$-fold rotation symmetry $\gamma$ of $\T$ about the center of $q$-gon~$t'$.
			Notice that $\beta$ fixes colors $1$ and $2$, and swaps color~$3$ and the color of $q$-gon $b$, 
			while $\gamma$ fixes colors $1$ and $3$, and maps color $2$, $4$, $5$, and $6$ 
			to the color of triangle $d$, color $2$, the color of $q$-gon $b$, and the color of triangle~$c$, respectively.
			
			\begin{enumerate}[(i)]
				\item Suppose $q$-gon $b$ is colored $5$.
				This implies that the $q$-gons around $p$-gon $t$ are colored $3$ and $5$ alternately.
				Now, $\beta$ will either fix or swap colors $4$ and~$6$.
				However, triangles $c$ and $d$ should be colored $4$ and $6$ and are mapped to each other by the reflection $\beta$.
				This means that $\beta$ should swap colors $4$ and $6$, and so triangles $e$ and $f$ should be respectively colored $6$ and $4$.
				Hence, the $p$ triangles sharing an edge with $p$-gon $t$ will be colored $2$, $4$, and $6$ in a cyclical manner, which implies that $p$ is a multiple of $3$.
				
				Let us go back to triangles $c$ and $d$.
				If triangle $c$ is colored $6$ and triangle $d$ is colored~$4$,
				then the $q$-fold rotation $\gamma$ effects the color permutation $(2\,4)$.
				This implies that $q$ must be even (see Fig.~\ref{3p3q3qcases}(a)).
				On the other hand, if triangle $c$ is colored $4$ and triangle $d$ is colored~$6$,
				then $\gamma$ effects the color permutation $(2\,6\,4)$.
				Hence, we have $q$ to be a multiple of $3$ (see Fig.~\ref{3p3q3qcases}(b)).
				In both cases, we obtain a perfect precise $6$-coloring of $\T$ that corresponds to the partition $O_1\cup O_2\cup O_3$,
				with $O_1$ the set of $p$-gons, $O_2=\set{g(J_2t_2)\mid g\in G}$, 
				where $t_2$~is the $q$-gon stabilized by the $q$-fold rotation $PR$ and
				$J_2=\gen{PR,QRQP}$ of index $2$ in~$G$, 
				and $O_3=\set{g(J_3t_3)\mid g\in G}$, where $t_3$ is the triangle stabilized by the reflection $Q$.
				If $q$~is even then  $J_3=\gen{Q, (PR)^2, PR(QR)^2}$ of index $3$ in $G$,
				and if $q$ is a multiple of $3$ then $J_3=\gen{Q,PRPQR,RPRQR}$ of index $3$ in $G$.
				
				\item Suppose $q$-gon $b$ is colored $4$.
				Then $\beta$ interchanges colors $3$ and $4$ so triangle $e$ should be colored $3$,
				and interchanges colors $5$ and $6$ so triangle $f$ should be colored $5$.
				Therefore, the $p$~triangles sharing an edge with $p$-gon $t$ will be colored $2$, $5$, $4$, $3$, and $6$ in a cyclical manner.
				We obtain that $p$ must be a multiple of $5$.
				
				We again have two different ways of assigning colors to triangles $c$ and $d$.
				If triangle $c$ is colored $5$ and triangle $d$ is colored $6$,
				then the $q$-fold rotation $\gamma$ induces the permutation~$(2\,6\,5\,4)$ of colors. 
				Hence $q$ must be a multiple of $4$ (see Fig.~\ref{3p3q3qcases}(c)).
				If instead triangles~$c$ and $d$ are colored $6$ and $5$, respectively,
				then $\gamma$ induces the permutation $(2\,5\,4)$ of colors, and so $q$ is a multiple of~$3$ (see Fig.~\ref{3p3q3qcases}(d)).
				For both possibilities, a perfect precise $6$-coloring of~$\T$ is generated that corresponds to the partition $O_1\cup O_2$,
				with $O_1$ the set of $p$-gons and $O_2=\set{g(Jt_1\cup Jt_2)\mid g\in G}$,
				where $t_1$ is the $q$-gon stabilized by the $q$-fold rotation~$PR$ and 
				$t_2$ is the triangle stabilized by the reflection $(RQ)^3R$.
				If $q$ is a multiple of~$4$ then $J=\gen{PR,(RQ)^3R,QPR(PRQ)^2,Q(PR)^2Q(RP)^3Q}$ of index $5$ in $G$.
				On the other hand, if $q$~is a multiple of $3$ then $J=\gen{PR,(RQ)^3R,Q(RP)^3Q,(QR)^2QPRQ}$ of index $5$ in~$G$.
				
				\item Finally, suppose $q$-gon $b$ is colored $6$.
				Similar arguments show that the $p$ triangles sharing an edge with $p$-gon $t$ will be colored $2$, $3$, $4$, $5$, and $6$ in a cyclical manner, 
				and so $p$ is again a multiple of $5$.
				
				Assigning color $4$ and color $5$ respectively to triangles $c$ and $d$ will mean that the
				$q$-fold rotation $\gamma$ effects the color permutation $(2\,5\,6\,4)$.
				Hence $q$ must be a multiple of $4$ (see Fig.~\ref{3p3q3qcases}(e)).
				On the other hand, if triangle $c$ is colored $5$ and triangle $d$ is colored $4$, then
				$\gamma$ effects the color permutation $(2\,4)(5\,6)$ and so $q$ should be even (see Fig.~\ref{3p3q3qcases}(f)).
				In both cases, we obtain a perfect precise $6$-coloring of $\T$ that corresponds to the partition $O_1\cup O_2$,
				with $O_1$ the set of $p$-gons and $O_2=\set{g(Jt_1\cup Jt_2)\mid g\in G}$,
				where $t_1$ is the $q$-gon stabilized by the $q$-fold rotation~$PR$ 
				and $t_2$ is the triangle stabilized by the reflection $(RQ)^5R$.
				If $q$ is a multiple of $4$ then  $J=\gen{PR,(RQ)^5R,(PQR)^2Q,(QRPR)^2Q,Q(RP)^2(RQ)^2}$ of index $5$ in $G$.
				For the second possibility where $q$ is even, we have $J=\gen{PR,(RQ)^5R,(RQ)^2(PR)^2(QR)^2,(QR)^2P(RQ)^2P}$ of index $5$ in~$G$.			
			\end{enumerate}
		
			\begin{figure}[htb]
				\centering
				$\begin{array}{ccc}
					\includegraphics[height=1in]{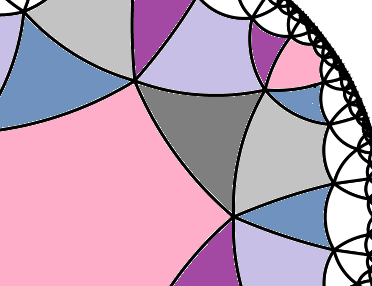}&
					\includegraphics[height=1in]{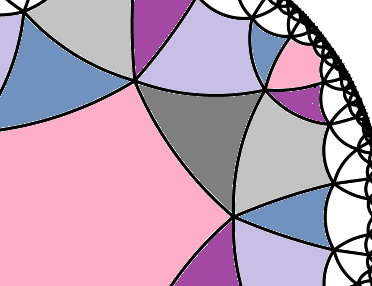}&
					\includegraphics[height=1in]{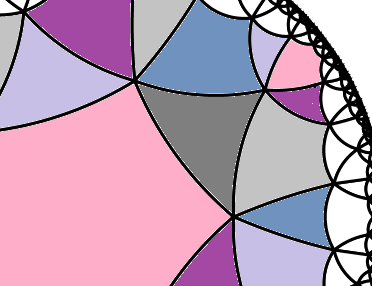}\\
					(a) & (b) & (c) \\
					\includegraphics[height=1in]{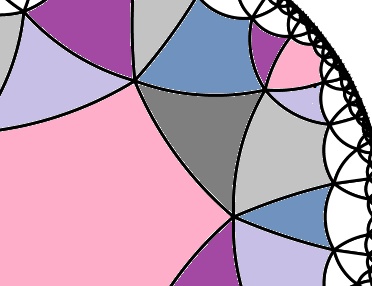}&
					\includegraphics[height=1in]{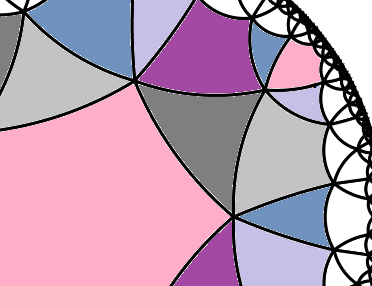}&
					\includegraphics[height=1in]{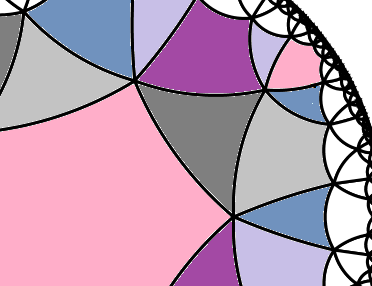}\\
					(d) & (e) & (f) \\
				\end{array}$
				\caption{Possible color assignments for a perfect precise 6-coloring of the tiling $(3.p.3.q.3.q)$, with $p\neq q$.}
				\label{3p3q3qcases}
			\end{figure}
			
			The following proposition summarizes the preceding discussion. 
			We illustrate some perfect precise $6$-colorings of the tiling $(3.p.3.q.3.q)$ with $p\neq q$ for some values of $p$ and $q$ in Fig.~\ref{3p3q3qexamples}. 
			
			\begin{figure}[htb]
				\centering
				$\begin{array}{ccc}
					\includegraphics[width=1.4in]{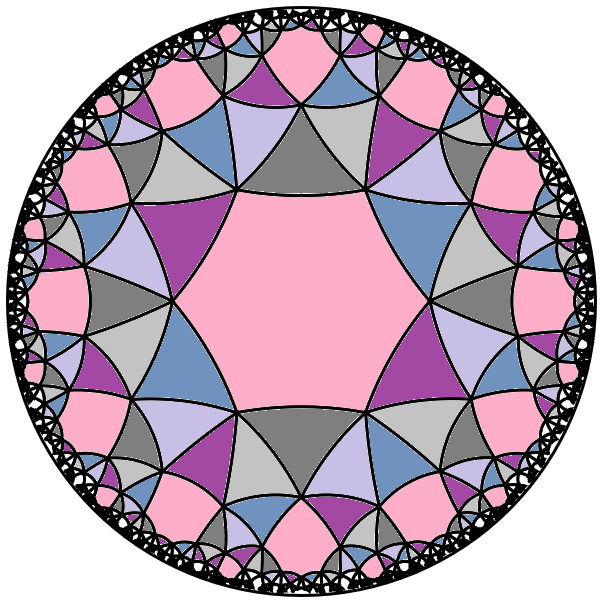}&
					\includegraphics[width=1.4in]{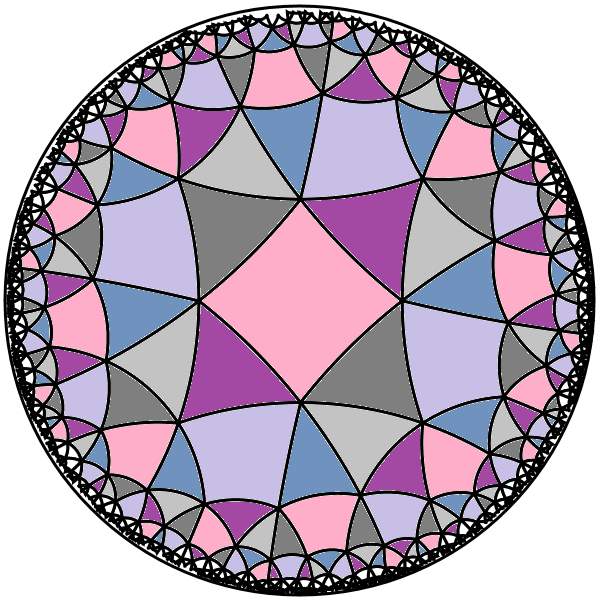}&
					\includegraphics[width=1.4in]{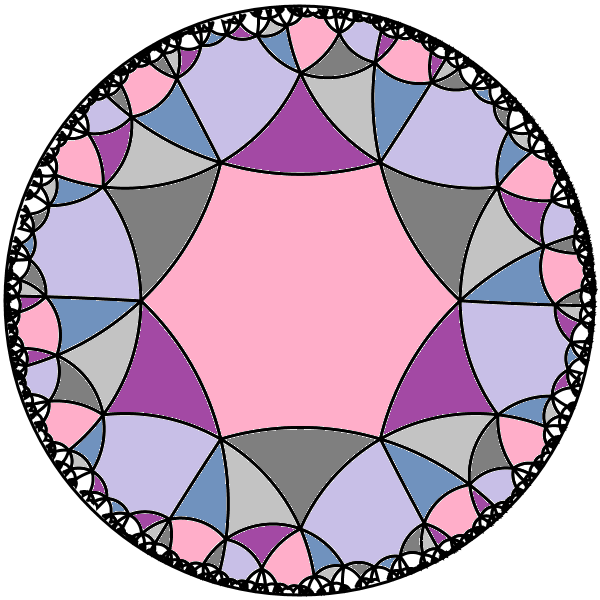}\\
					(3.6.3.3.3.3) & (3.3.3.4.3.4) &(3.3.3.6.3.6)_1 \\
					\includegraphics[width=1.4in]{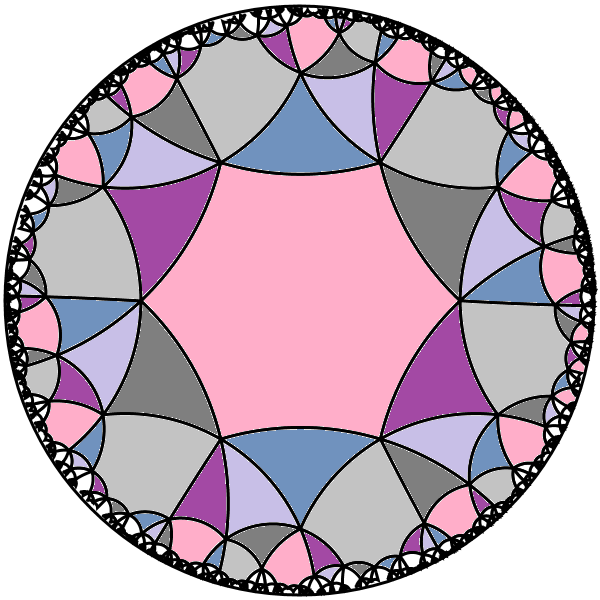}&
					\includegraphics[width=1.4in]{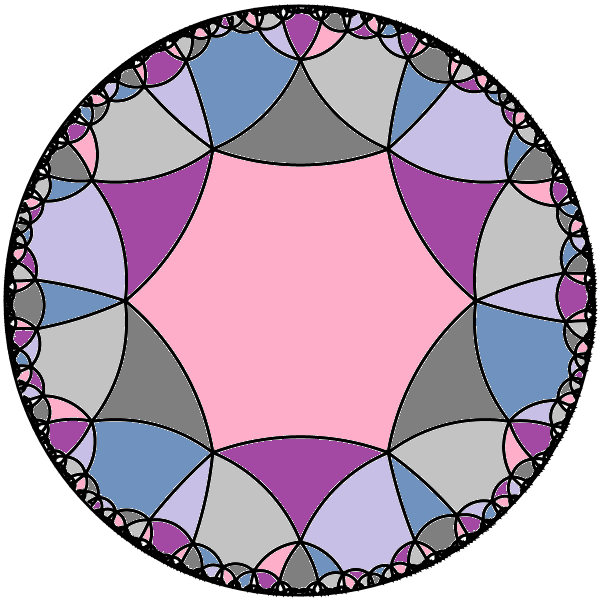}&
					\includegraphics[width=1.4in]{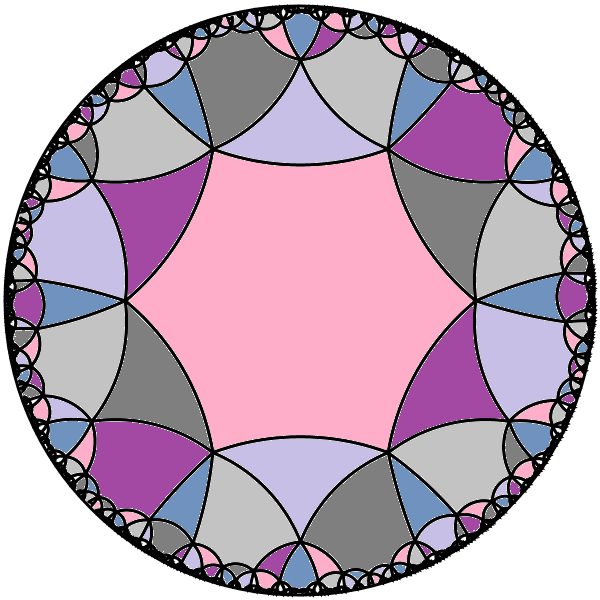}\\
					(3.3.3.6.3.6)_2&(3.5.3.6.3.6)_1  & (3.5.3.6.3.6)_2\\
					\includegraphics[width=1.4in]{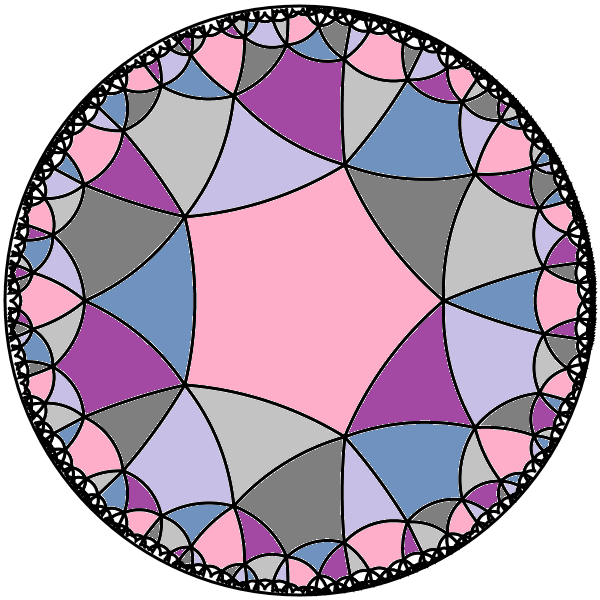}&
					\includegraphics[width=1.4in]{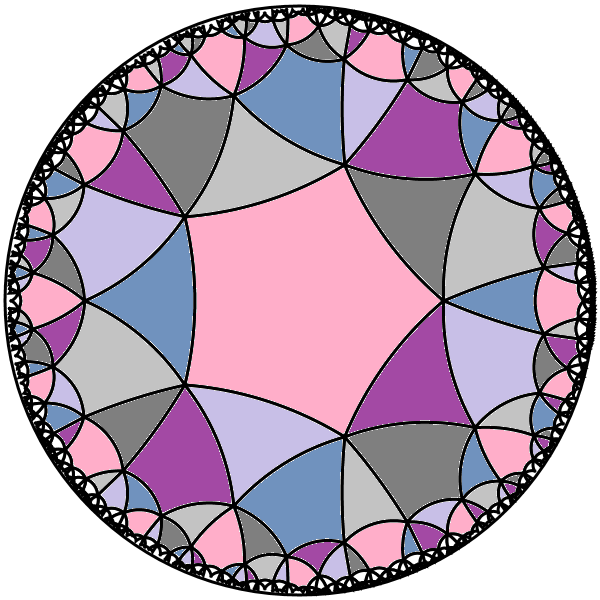}&
					\includegraphics[width=1.4in]{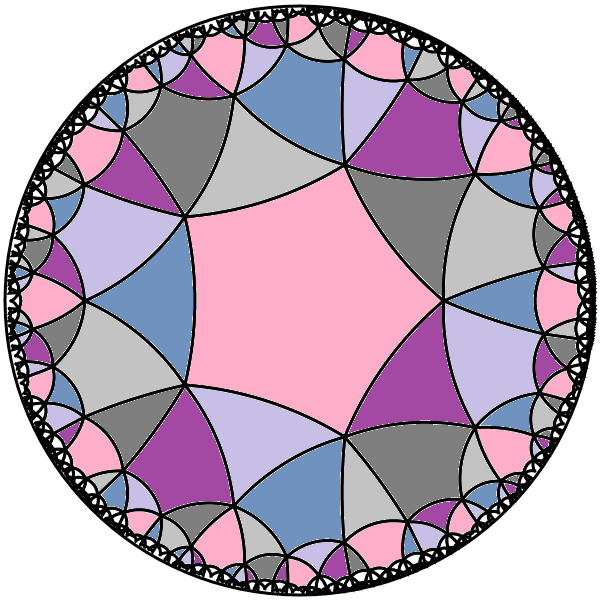}\\
					(3.5.3.4.3.4)_1 & (3.5.3.4.3.4)_2 & (3.5.3.4.3.4)_3
				\end{array}$
				\caption{Some perfect precise $6$-colorings of the tiling $(3.p.3.q.3.q)$, with $p\neq q$.}
				\label{3p3q3qexamples}
			\end{figure}
			
			\begin{prop}\label{3p3p3qresults}
				Let $\T$ be the semiregular tiling $(3.p.3.q.3.q)$ with $p\neq q$.
				Then the number of perfect precise $6$-colorings of $\T$ is
				\begin{enumerate}
					\item one if and only if one of the following hold: 
					$p$ is a multiple of $3$ but not of $5$ and $q$ is even,
					$p$ is a multiple of $5$ but not of $3$ and $q$ is even but not a multiple of $4$,
					$p$~is a multiple of $3$ but not of $5$ and $q$ is an odd multiple of $3$, or
					$p$ is a multiple of $5$ but not of $3$ and $q$~is an odd multiple of $3$;
					
					\item two if and only if one of the following hold:
					$p$ is a multiple of $15$ and $q$ is even but not a multiple of $4$,
					$p$ is a multiple of $15$ and $q$ is an odd multiple of $3$, 				
					$p$ is a multiple of $3$ but not of $5$ and $q$ is a multiple of $6$, or
					$p$ is a multiple of $5$ but not of $3$ and $q$ is a multiple of~$6$ but not of $4$;
					
					\item three if and only if $p$ is a multiple of $5$ but not of $3$ and $q$ is a multiple of $4$ but not of $3$;
					
					\item four if and only if one of the following hold:
					$p$ is a multiple of $15$ and $q$ is a multiple of $4$ but not of $3$,
					$p$ is a multiple of $15$ and $q$ is a multiple of $6$ but not of $4$, or
					$p$ is a multiple of $5$ but not of $3$ and $q$ is a multiple of $12$; and
					
					\item six if and only if $p$ is a multiple of $15$ and $q$ is a multiple of $12$.				
				\end{enumerate}
			\end{prop}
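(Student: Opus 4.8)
The plan is to treat the proposition as a counting problem layered on top of the forcing analysis already carried out in Cases (i)--(iii). First I would record that this analysis reduces every perfect precise $6$-coloring of $\T$ to exactly one of six types: after normalizing the colors around a vertex $v$ (with the $p$-gon $t$ colored $1$ and $q$-gon $t'$ colored $3$), the reflection $\alpha$ forces every $p$-gon to be colored $1$, preciseness forces the $q$-gon $b$ into one of the colors $4,5,6$ (Cases (ii), (i), (iii) respectively), and within each case the reflections $\alpha,\beta$ and the $q$-fold rotation $\gamma$ determine the whole coloring up to the single binary choice for triangles $c$ and $d$ (the subcase pairs (a)/(b), (c)/(d), (e)/(f) of Fig.~\ref{3p3q3qcases}). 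Each type carries a divisibility condition on $p$ forced by the cyclic pattern of the $p$ triangles around $t$ ($3\mid p$ in Case (i), $5\mid p$ in Cases (ii) and (iii)) and one on $q$ forced by the order of the color permutation that $\gamma$ induces.

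Next I would tabulate the six existence conditions: (i)(a) holds iff $3\mid p$ and $2\mid q$; (i)(b) iff $3\mid p$ and $3\mid q$; (ii)(c) iff $5\mid p$ and $4\mid q$; (ii)(d) iff $5\mid p$ and $3\mid q$; (iii)(e) iff $5\mid p$ and $4\mid q$; and (iii)(f) iff $5\mid p$ and $2\mid q$. Granting that these six colorings are pairwise distinct whenever they coexist, the number of perfect precise $6$-colorings of $\T$ is simply the number of satisfied conditions, i.e.\ $[3\mid p]\bigl([2\mid q]+[3\mid q]\bigr)+[5\mid p]\bigl([2\mid q]+[3\mid q]+2[4\mid q]\bigr)$ with $[\,\cdot\,]$ the Iverson bracket. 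The remaining work is then a finite check: split on whether $p$ is a multiple of $3$ only, of $5$ only, of $15$, or of neither (the last giving $0$, which is why such pairs are absent from the statement), and on the residue class of $q$ modulo $12$. Carrying this out reproduces the listed regimes; note that conditions (ii)(c) and (iii)(e) coincide, so the factor $2[4\mid q]$ is what pushes the count to $3,4$, or $6$, and a glance at the six sub-totals shows the running total always lies in $\{0,1,2,3,4,6\}$, which explains why the value $5$ never appears.

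The two statements that need genuine proof rather than arithmetic are \emph{completeness} and \emph{distinctness}. Distinctness is the easier of the two: colorings from different cases disagree on the single tile $b$ (colored $4$, $5$, or $6$), and the two colorings inside one case disagree on $c$ and $d$, so all coexisting types are inequivalent; in particular the two types sharing the condition $5\mid p,\ 4\mid q$ are distinguished because $b$ is colored $4$ in (ii)(c) but $6$ in (iii)(e). Completeness is furnished by the forcing argument preceding the statement, which shows the enumeration above misses nothing. The hard part, and where I expect the real effort to go, is the existence half of each equivalence: verifying through Theorem~\ref{gencase} that each of the exhibited partitions of the form \eqref{ohi} genuinely yields a perfect precise coloring. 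This requires checking, for each type, that the displayed subgroup $J$ (or $J_2$, $J_3$) contains the relevant tile stabilizers and has the claimed index---routine for the index-$2$ and index-$3$ subgroups of Case (i) but a more delicate group-theoretic computation for the index-$5$ subgroups $\gen{PR,(RQ)^3R,\ldots}$ and $\gen{PR,(RQ)^5R,\ldots}$ of Cases (ii) and (iii)---and confirming that the resulting color classes meet every vertex in six distinct colors, so that the coloring is precise. Once these partition verifications are in place, the enumeration of the preceding paragraph delivers the stated count in every case.
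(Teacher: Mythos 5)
Your proposal is correct and takes essentially the same route as the paper: the paper's proof of this proposition is precisely the forcing analysis in Cases (i)--(iii) preceding it, which yields the same six types with the same existence conditions ($3\mid p$ paired with $2\mid q$ or $3\mid q$; $5\mid p$ paired with $4\mid q$, $3\mid q$, $4\mid q$, or $2\mid q$), together with the exhibited partitions verifying perfectness via Theorem~\ref{gencase}, after which the proposition ``summarizes the preceding discussion.'' Your explicit Iverson-bracket tally, the distinctness check on tiles $b$, $c$, $d$, and the observation that the repeated condition $5\mid p,\ 4\mid q$ is what produces the counts $3$, $4$, $6$ (and excludes $5$) simply make explicit the bookkeeping the paper leaves implicit.
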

	
	\section{More Colors and Outlook}\label{finalsec}		
	
		In Section~\ref{methods}, we saw that Theorem~\ref{gencase} may be used to generate all (chirally) perfect precise colorings of a $k$-valent semiregular planar tiling $\T$.
		This process involves initially obtaining perfect colorings of a tiling $\T$ and subsequently filtering out those that are not precise. 
		An advantage of this procedure is that it allows us to obtain perfect colorings of semiregular planar tilings $\T$ where
		no two tiles sharing a common vertex have the same color and the number of colors this time is more than the valency of~$\T$. 
		For ease of reference, we shall still refer to such colorings as perfect precise colorings.
		
		To illustrate, we again consider the $(3.4.6.4)$ Euclidean tiling $\T$ with symmetry group $G$ of type~$\ast 632$.
		Let us obtain a perfect precise coloring of $\T$ with the least number of colors such that the three $G$-orbits of $\T$,
		namely, the orbit $X_1$ of hexagons, the orbit $X_2$ of squares, and the orbit of $X_3$ of triangles, do not share any colors. 
		That is, we want a partition of $\T$ of the form $\cup_{i=1}^3\{g(J_it_i)\mid g\in G\}$
		for some suitable tile $t_i\in X_i$ and subgroup $J_i$ of~$G$ such that $J_i$~contains~$\stab_G(t_i)$.
		Let us assign the same color to each tile in the orbit $X_1$, and also the same color to each tile in the orbit $X_3$.
		That is, we take $J_1=J_3=G$. 
		For the coloring to be precise, the orbit $X_2$ should be colored with at least three colors because of the three squares that surround each triangle in $\T$.  
		This means that we need to find a subgroup $J_2$ of at least index $3$ in $G$.
		We may take $J_2$ to be the subgroup $\gen{PRQRP,Q,R}$ of type $\ast 632$ to obtain a coloring of the orbit $X_2$ with three colors
		that correspond to the partition $\{g(J_2t_2)\mid g\in G\}$ where $t_2$ is the square stabilized by the two-fold rotation $PRQRPR$. 
		This will yield a perfect precise coloring of $\T$ with five colors.
		Another possibility is to take $J_2$ to be the subgroup $\gen{P,Q,QRPQRQ}$ of type $2{\ast}22$ where $t_2$ is the square stabilized by the two-fold rotation~$PQ$.
		The perfect precise $5$-colorings of $\T$ obtained are shown in Fig.~\ref{3464}.
		
		\begin{figure}[htb]
			\centering
			$\begin{array}{cc}
				\includegraphics[width=2in]{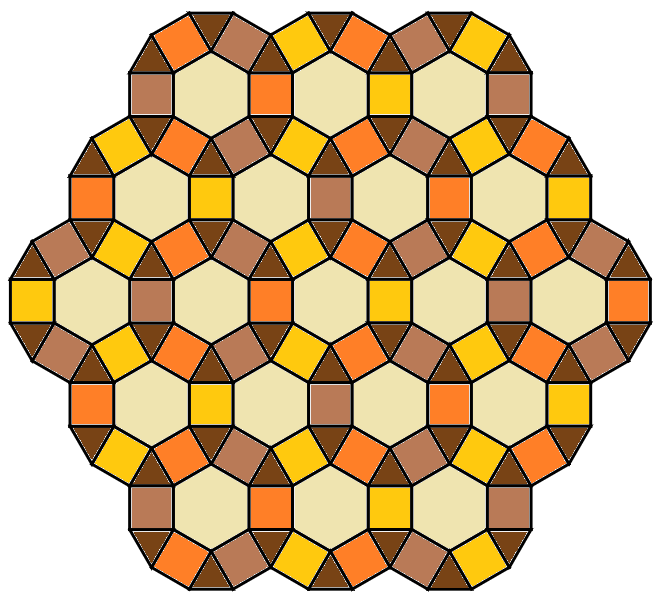} &
				\includegraphics[width=2in]{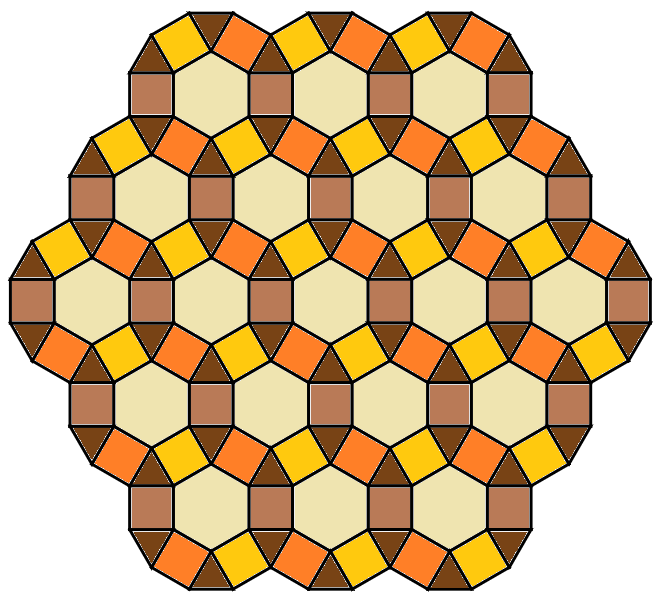}\\
				\text{subgroup }J_2 \text{ of type}\ast632 &
				\text{subgroup }J_2 \text{ of type }2*22
			\end{array}$
			\caption{Perfect precise 5-colorings of the tiling $(3.4.6.4)$.}
			\label{3464}
		\end{figure}
		
		We also provide examples of perfect precise colorings (with more than $4$ colors) of the $(6.4.6.4)$ hyperbolic tiling $\T$. 
		The orbit $X_1$ of hexagons and the orbit $X_2$ of squares are the two orbits of $\T$ formed under the action of its symmetry group $G$ of type $\ast 642$.
		Consider the following subgroups of $G$: $J_1=\gen{PRP,Q,R}$ of index $2$, $J_2=\gen{P,R,QRPRQ,QRQRQ}$ of index $3$, 
		${K_1=\gen{PQRQRP,Q,R}}$ of index $4$, and $K_2=\gen{P,R,QRQ}$ of index $2$.
		Take $t_1$ to be the hexagon stabilized by the six-fold rotation $QR$ and $t_2$ to be the square stabilized by the two-fold rotation $PR$.
		Then the $5$-coloring corresponding to the partition $\cup_{i=1}^2\set{g(J_it_i)\mid g\in G}$ of $\T$ 
		and the $6$-coloring corresponding to the partition $\cup_{i=1}^2\set{g(K_it_i)\mid g\in G}$ of $\T$ 
		are perfect and precise. 
		The colorings are shown in Fig.~\ref{6464more}.
		
		\begin{figure}[htb]
			\centering
			$\begin{array}{cc}
				\includegraphics[width=1.4in]{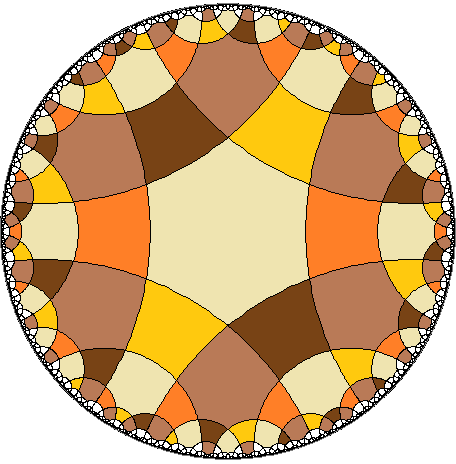} &
				\includegraphics[width=1.4in]{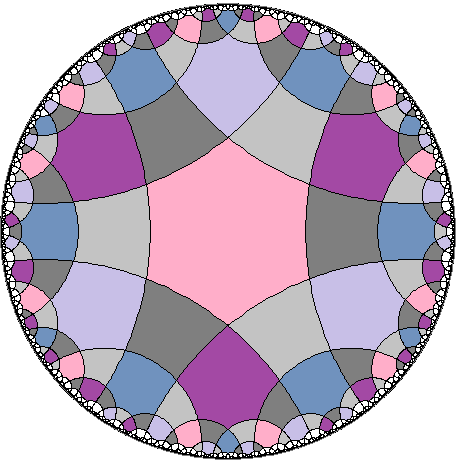}
			\end{array}$
			\caption{Perfect precise 5-coloring and $6$-coloring of the tiling $(6.4.6.4)$.}
			\label{6464more}
		\end{figure}
		
		In this contribution, we have presented two approaches to obtain perfect precise colorings of semiregular planar tilings.
		The first method involves a combinatorial analysis of the possible ways to color the tiling such that the resulting coloring is precise and perfect.
		By applying this method, we have identified all perfect precise colorings for certain families of $k$-valent semiregular planar tilings where $k\leq 6$.
		Alternatively, group-theoretic methods can also be used to obtain perfect precise colorings of tilings.
		This approach is applicable in finding the perfect precise coloring of a specific semiregular tiling. 
		It is also particularly useful when dealing with a broader definition of perfect precise colorings, wherein the number of colors used exceeds the valency of the tiling.
		
		Recall that in a semiregular planar tiling $\T$, the action of the symmetry group on the set of vertices is transitive. 
		As a result, when we have a perfect coloring of $\T$, 
		we only need to ensure that no two tiles sharing one vertex have the same color to confirm that the coloring is also precise.
		Building on this observation, it would be interesting to determine the perfect precise colorings of isogonal tilings, as well as $k$-isogonal tilings.
		In Fig.~\ref{isogonal}, we provide an example of a perfect precise coloring of the isogonal tiling IG 71.

		\begin{figure}[htb]
			\centering
			\includegraphics[width=2in]{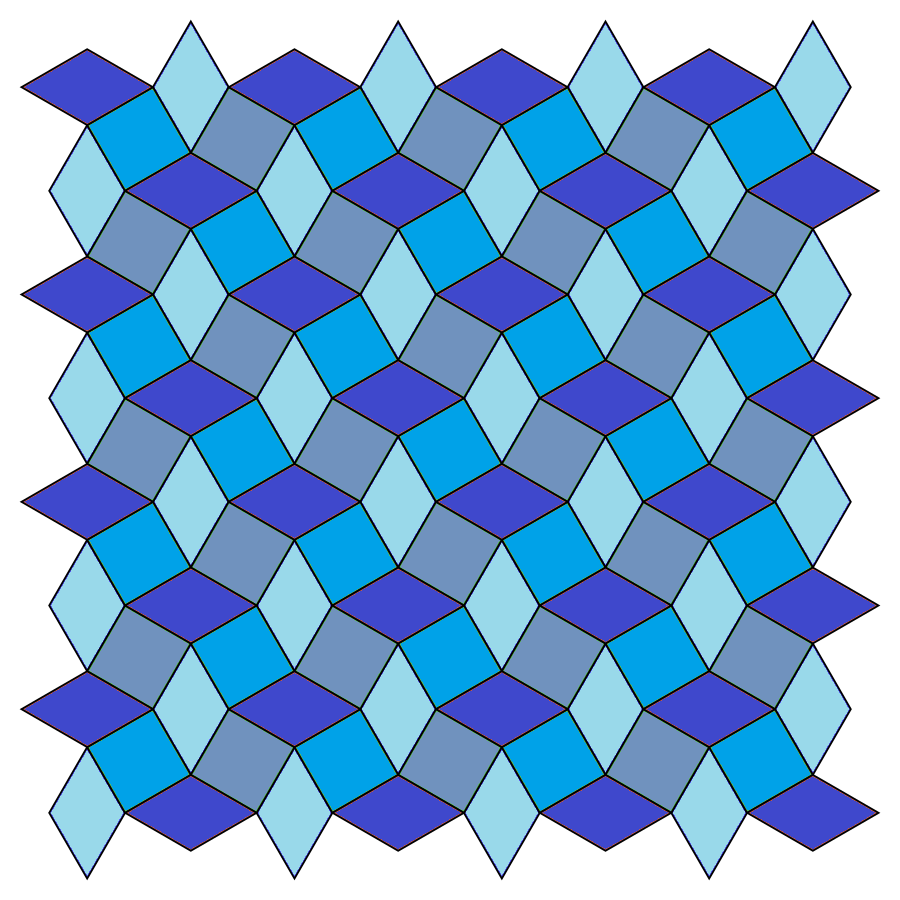}
			\caption{A perfect precise coloring of the isogonal tiling IG 71.}
			\label{isogonal}
		\end{figure}	

	\section*{Acknowledgements}
		This article is dedicated to the memory of our mentor, Professor Ren\'e P.~Felix. 
		The authors are grateful to the anonymous referees for their valuable remarks, which have helped improve the quality of the paper.

	\bibliographystyle{abbrv}
	\bibliography{mybib}

\begin{thebibliography}{10}

\bibitem{bpf}
E.~Bugarin, M.~de~las Peñas, and D.~Frettl\"{o}h.
\newblock Perfect colourings of cyclotomic integers.
\newblock {\em Geom. Dedicata}, 162:271--282, 2013.

\bibitem{cr}
D.~Crowe.
\newblock Precise perfect colorings of {A}rchimedean tessellations.
\newblock {\em Vis. Math.}, 1(1):9 HTML documents; approx. 10, 1999.

\bibitem{datta}
B.~Datta and S.~Gupta.
\newblock Semi-regular tilings of the hyperbolic plane.
\newblock {\em Discrete Comput. Geom.}, 65(2):531--553, 2021.

\bibitem{de}
M.~de~Las~Pe{\~n}as, R.~Felix, and G.~Laigo.
\newblock Colorings of hyperbolic plane crystallographic patterns.
\newblock {\em Z.~Krist.}, 221(10):665--672, 2006.

\bibitem{waer}
B.~V. der Waerden and J.~Burckhardt.
\newblock Farbgruppen.
\newblock {\em Z.~Krist.}, 115(3-4):231--234, 1961.

\bibitem{Ev}
I.~Evidente.
\newblock {\em Colorings of Regular Tilings with a Singular Center}.
\newblock Doctoral dissertation, University of the Philippines Diliman, 2012.

\bibitem{fr}
D.~Frettl\"{o}h.
\newblock Counting perfect colourings of plane regular tilings.
\newblock {\em Z.~Krist.}, 223(11-12):773--776, 2008.

\bibitem{GAP4}
The GAP~Group.
\newblock {\em {GAP -- Groups, Algorithms, and Programming, Version 4.12.2}},
  2022.

\bibitem{JW19}
A.~Junio and M.~Walo.
\newblock Perfect colorings of patterns with multiple orbits.
\newblock {\em Acta Crystallogr. Sect. A}, 75(6):814--826, 2019.

\bibitem{miro}
E.~Miro, A.~Zambrano, and A.~Garciano.
\newblock Construction of weavings in the plane.
\newblock {\em Acta Crystallogr. Sect. A}, 74(1):25--35, 2018.

\bibitem{mi}
K.~Mitchell.
\newblock Constructing semiregular tilings.
\newblock A paper presented at the Spring 1995 Meeting of the Seaway Section of
  the Mathematical Association of America at Hobart and William Smith Colleges,
  Geneva, NY, 1995.
\newblock {h}ttp://people.hws.edu/mitchell/tilings/Part1.html.

\bibitem{ri1}
J.~Rigby.
\newblock Perfect precise colourings of triangular tilings, and hyperbolic
  patchwork.
\newblock {\em Symmetry Cult. Sci}, 8:265--288, 1997.

\bibitem{ri2}
J.~Rigby.
\newblock Precise colourings of regular triangular tilings.
\newblock {\em Math.~Intelligencer}, 20(1):4--11, 1998.

\bibitem{roth}
R.~Roth.
\newblock Color symmetry and group theory.
\newblock {\em Discrete Math.}, 38(2-3):273--296, 1982.

\bibitem{rov}
R.~Santos and R.~Felix.
\newblock Perfect precise colorings of plane regular tilings.
\newblock {\em Z.~Krist.}, 226(9):726--730, 2011.

\bibitem{sen}
M.~Senechal.
\newblock Color symmetry.
\newblock {\em Comput. Math Appl.}, 16(5-8):545–553, 1988.

\bibitem{hyde}
B.~Thompson and S.~Hyde.
\newblock A theoretical schema for building weavings of nets via colored
  tilings of two-dimensional spaces and some simple polyhedral, planar and
  three-periodic examples.
\newblock {\em Isr. J. Chem.}, 58(9-10):1144--1156, 2018.

\bibitem{y1}
N.~Yaz.
\newblock Perfect precise colourings of {$\{3,n\}$} with {$n$} colours.
\newblock {\em J. Geom.}, 89(1-2):186--195, 2008.

\bibitem{y2}
N.~Yaz.
\newblock Perfect precise colourings of {$\{3,n\}$} with {$n+1$} colours.
\newblock {\em J. Geom.}, 89(1-2):179--185, 2008.

\end{thebibliography}
\end{document}